\DeclareMathOperator*{\argmax}{arg\,max}
\DeclareMathOperator*{\argmin}{arg\,min}
\newtheorem{theorem}{Theorem}
\newtheorem{remark}{Remark}
\newtheorem{lemma}{Lemma}
\newcommand{\xt}{x_{t}}
\newcommand{\xti}{x_{t}^{(i)}}
\newcommand{\lip}{\mathrm{Lip}}
\newcommand{\utheta}{u_{\theta}}
\newcommand{\phieta}{\phi_{\eta}}
\newcommand{\phietatheta}{\phi_{\eta(\theta)}}
\title{\LARGE \bf
High-dimensional Optimal Density Control with Wasserstein Metric Matching
}
\author{Shaojun Ma \and Mengxue Hou \and Xiaojing Ye \and Haomin Zhou% <-this % stops a space
\thanks{This work was supported in part by NSF under grants DMS-1925263 and DMS-2152960,
and ONR N00014-21-1-2891.}% <-this % stops a space
\thanks{S.\ Ma and H.\ Zhou are with the School of Mathematics, Georgia Tech, Atlanta, GA 30332, USA
        {\tt\small \{shaojunma,hmzhou\}@gatech.edu}}%
\thanks{M.\ Hou is with the College of Engineering, Purdue University, West Lafayette, IN 47906, USA
        {\tt\small hou178@purdue.edu}}%        
\thanks{X.\ Ye is with the Department of Mathematics and Statistics, Georgia State University,
        Atlanta, GA 30303, USA
        {\tt\small xye@gsu.edu}}%
}
\begin{document}

\maketitle
\thispagestyle{empty}
\pagestyle{empty}

%%%%%%%%%%%%%%%%%%%%%%%%%%%%%%%%%%%%%%%%%%%%%%%%%%%%%%%%%%%%%%%%%%%%%%%%%%%%%%%%
\begin{abstract}

We present a novel computational framework for density control in high-dimensional state spaces. The considered dynamical system consists of a large number of indistinguishable agents whose behaviors can be collectively modeled as a time-evolving probability distribution. The goal is to steer the agents from an initial distribution to reach (or approximate) a given target distribution within a fixed time horizon at minimum cost. To tackle this problem, we propose to model the drift as a nonlinear reduced-order model, such as a deep network, and enforce the matching to the target distribution at terminal time either strictly or approximately using the Wasserstein metric. The resulting saddle-point problem can be solved by an effective numerical algorithm that leverages the excellent representation power of deep networks and fast automatic differentiation for this challenging high-dimensional control problem. A variety of numerical experiments were conducted to demonstrate the performance of our method.

\end{abstract}

%%%%%%%%%%%%%%%%%%%%%%%%%%%%%%%%%%%%%%%%%%%%%%%%%%%%%%%%%%%%%%%%%%%%%%%%%%%%%%%%
\section{INTRODUCTION}

Recent years have witnessed an emerging research interest in optimal control problems with large collective of agents, such as drones, robots, and vehicles \cite{brambilla2013swarm,dorigo2021swarm}. Such problems appear in many important real-world applications, including surveillance over a large region, time-sensitive search-and-rescue, infrastructure inspection and maintenance, among many others. Meanwhile, the cost to manufacture robots, including the builtin electrical components and batteries, has reduced significantly during the past decades. The individual robots used in these swarm control applications are often of small size and thus have limited computation and communication power. However, using such swarms of robots is particularly necessary and cost-effective to achieve goals in the aforementioned applications. Optimal control of swarm robots is to design efficient algorithms that can steer them to achieve specific tasks achieve specified high-level tasks in a way that accommodates the high dimensionality of the system \cite{sinigaglia2022robust}.

\subsection{Related Work}

Optimal control of large collective of agents has been commonly considered in the context of density control based on mean-field models
%Classical path planning and control algorithms either do not scale well with the number of robots or do not allow the designer to specify complex high-level objectives. 
%
% In recent years, such density or swarm control problems have been cast using mean-field models
\cite{elamvazhuthi2019mean,zheng2022backstepping,lasry2007mean}. 
The idea of mean-field models is to characterize the collective behaviors of the agents in the swarm using a time-evolving probability density function defined on the specified state space. Mean-field models allow for both deterministic and even stochastic behaviors if the randomness of individual agents can be appropriately described by stochastic processes. The dynamics of the density function is governed by the associated continuity equation in the deterministic case and Fokker-Planck equation in the stochastic case, which are evolution partial differential equations (PDEs). In these models, the probability density characterizing the swarm behavior does not depend on the number of agents, but only on the state space of individual agents. Nevertheless, direct computation of these PDEs can still be computationally prohibitive due to the high dimension of the state space.
%
% These models naturally reduce to density control, which guide the population dynamics of the agents Mean-field models provide a general probabilistic framework that can be used to design control algorithms for swarms of agents with stochastic behaviors. In this framework, swarm tasks are specified in terms of macroscopic population dynamics that are described by a mean-field model, and this model is used to derive the robot control policies, which guide the microscopic dynamics of individual robots and drive the swarm to collectively reproduce the macroscopic dynamics in expectation. A consistent way of analyzing the performance of such control policies when they are implemented on a finite number of robots has been developed in \cite{krishnan2022multiscale}.
%
% have been used to devise robust path planning algorithms for robotic swarms to perform collective tasks such as coverage and mapping \cite{hsieh2008biologically,elamvazhuthi2018pde}.
% Finite-dimensional mean-field models consist of a linear system of ordinary differential equations (ODEs) describing the dynamics of a swarm that evolves according to a Markov chain over a finite state space, which consists of a set of tasks or discrete spatial locations (e.g., \cite{berman2009optimized}). One type of infinite-dimensional mean-field model is a linear parabolic advection-diffusion partial differential equation (PDE) governing the space-time dynamics of a swarm that follows a deterministic velocity field perturbed by noise, modeled by a Wiener process, over a continuous state space.
%
Thus, most xisting works on density controls approximate the density functions based on spatial discretization \cite{accikmecse2012markov} or using kernel density estimation \cite{bandyopadhyay2017probabilistic,eren2017velocity}. 
%
% For instance, \cite{eren2017velocity} synthesizes control laws that are inversely proportional to the estimated density. On the other hand, Markov chain-based probabilistic algorithms for swarm density control, such as \cite{accikmecse2012markov}, do not necessarily require communication between robots and provide interesting self-healing properties. However, {\it a priori }discretization of the state space is needed to set up the control problem in the Markov-chain framework. The control laws in \cite{accikmecse2012markov} do not depend on the estimated swarm density, but rather are implicit functions of the target swarm density and the state-space discretization.

While the agents and the population dynamics are steered by the control, an optimal density control problem aims at finding the controls that achieve the specified task with minimum control cost. The work \cite{eren2017velocity} designed velocity field to make swarms density converges to the desired/commanded density distribution. In \cite{zhao2011density}, the authors presented a density control by improving Smoothed Particle Hydrodynamic (SPH) \cite{pimenta2008control} method with collision free condition, where one defines the density of a robot as the weighted sum of distances to its nearby robots within a certain range. A data-driven approach was proposed in \cite{choi2021convex} to learn the control by estimating the Koopman operators. In \cite{yi2020nonlinear}, the authors provided a control strategy to steer the state from an initial distribution to a terminal one with specified mean and covariance, which is also called nonlinear covariance control. For other more general works we refer to \cite{otcontrol1, otcontrol2, haasler2020optimal}. 

A specific type of density control problem, known as optimal transport, has received significant attentions in recent years. Several works have combined Wasserstein metric and optimal control to develop relevant control algorithms. For instance, \cite{caluya2021wasserstein} develops Wasserstein proximal algorithms to solve density control problem, where a nonlinear drift term is considered. In \cite{bonnet2021necessary}, the authors discussed the optimality conditions for optimal control problems in Wasserstein spaces. In \cite{yang2020wasserstein}, a Wasserstein based robust control was developed to resolve the issue that uncertain variables is unavailable. In \cite{hakobyan2021wasserstein}, the authors proposed a novel model-predictive control (MPC) method for limiting the safety risk when the distribution of the obstacles is set to be within an ambiguity set which is measured by the Wasserstein metric. In \cite{hoshino2020finite}, terminal cost as Wasserstein distance and finding the parameters of normalizing flow \cite{dinh2014nice} by minimizing the terminal cost have been considered. 
In \cite{elamvazhuthidynamical}, the authors presented a primal-dual formulation of optimal control problem with existence proof and efficient numerical method, similar method can also be found in \cite{gao2020energy} where the problem is solved by a Mean Field Game (MFG) approach. 
In \cite{frederick2022collective}, the authors calculated the individual robot trajectories by alternating two gradient flows that involve an attractive potential, a repelling function, and a process of intermittent diffusion, in which way a large group of robots are employed to accomplish the task of shape formation. 

Optimal control over discrete state spaces can be cast as Markov chain models as shown in \cite{accikmecse2012markov}. For optimal control with continuous state spaces, the density dynamics are determined by the corresponding continuity equation or Fokker-Planck equation. For instance, \cite{roy2018fokker,sinigaglia2022density} provided both theoretical analysis and numerical methods of the optimal control to achieve target equilibrium density. In \cite{chen2021density}, the dynamics of the control was discretized in time and a multi-marginal optimal transport scheme was used to solve the problem. In \cite{inoue2021optimal}, the authors developed a method to solve the optimal transport based control based on Voronoi Tessellation coverage. A distributed algorithm for density estimation to reduce the computational cost is proposed in \cite{zheng2021distributed}. 
Multi-agent optimal control has been widely applied in the domain of robotics in recent years. For a general introduction we refer to \cite{feng2020overview}. In particular, \cite{zhu2020multi} presents the work of using reinforcement learning to control multi-robot system. Futhermore, \cite{luis2020online, wang2017safety, onken2021neural} carefully designed an objective function to minimize the control energy, $L_2$ based difference between the initial and target positions, meanwhile keeping collision-free conditions. Parameterizing controls using neural networks was also considered in  \cite{onken2021neural}. 

\subsection{Main contributions of this work}

In this work, we propose a novel method to compute the optimal control $u$ which steers an initial density as close as possible to a target density at a prescribed terminal time, while minimizing the control cost. The major advantage of the proposed method is that it can be readily applied to the cases with high-dimensional state spaces, where computation of the associated continuity equation or Fokker-Planck equations is prohibitive using existing approaches. In our approach, this issue is effectively tackled by using a large number of synthesized agents whose trajectories can be easily evaluated using decoupled ODEs or SDEs. Therefore, our approach does not require any spatial discretization using finite difference and element methods or any basis representations, and the expected cost function values can be accurately approximated using empirical expectations, which allows our method to scale to much higher dimensions that are traditionally considered computationally infeasible. Moreover, the proposed method can readily adopt parallel computation because of the decoupling, and hence the computation can be done very fast by leveraging the power of standard deep learning computation framework. We also provide convergence analysis, including the rate of convergence, of the numerical algorithm for solving the resulting saddle-point problem. We validate our approach in numerical simulations that include complex scenarios in various tests with dimension up to 100.

\subsection{Paper organization}
The remainder of the paper is organized as follows. In Section \ref{sec:method}, we present the proposed framework to solve the optimal density control with Wasserstein metric matching to any target distribution at a prescribed terminal time. The framework renders a saddle-point problem of the control and the dual variable of the Wasserstein distance. We also provide convergence analysis of the proposed method including the convergence rate. In Section \ref{sec:result}, we conduct several numerical experiments on synthetic and real data to demonstrate the performance of the proposed algorithm. Section \ref{sec:conclusion} concludes this work.

\subsection{Notations}
Throughout this paper, we use $|\cdot|$ to denote the absolute value of a scalar and Euclidean norm of a vector. 
Furthermore, $\|\cdot\|$ denotes the matrix norm induced by vector Euclidean norm. We also use $\|\cdot\|_{\cdot}$ to denote function norms where the space is specified in the subscript $\cdot$. The spatial domain is denoted by $\Omega$, which is assumed to be a closed and quasiconvex subset of $\Rbb^{d}$. Integrals over $\Omega$ or $\Omega \times \Omega$ are written without subscript for notation simplicity unless otherwise specified. We use $\mu$ to denote standard Lebesgue measure on $\Rbb^{d}$.
We use $\nabla$ to denote the gradient with respect to the state variable $x$. Gradient and partial derivatives with respect to any other variables will be indicated by their subscripts.
We denote $\Nbb$ the set of positive integers, and $[n]:=\set{1,\dots,n}$ for notation simplicity.

\section{PROPOSED METHOD}
\label{sec:method}
In this section, we develop a general computational framework for solving density control over high-dimensional state spaces. 
To maximize applicability of our framework, we use the following prototype mean-field model of density control with any user-defined (running) cost functional $c$:
\begin{equation}
    \label{eq:dc-obj}
    \min_{u}\ \Ebb\sbr[2]{\int_0^{T} c(\xt,u(\xt))\, dt} + \gamma D(\rho_{T}, \nu)
\end{equation}
where $\xt \in\Omega$ is the dynamic following
\begin{equation}
    \label{eq:dc-sde}
    \begin{cases}
        d\xt = u(\xt)dt + \sigma(\xt) dW_{t}, \\
        x_{0} \sim \rho_{0},
    \end{cases}
\end{equation}
which represents any indistinguishable agent in the swarm to be controlled.
In \eqref{eq:dc-obj}, $\rho_0$ is a given initial population density, $\nu$ is a given target population density, $D$ is a metric that measures the distance between two probability densities, $\gamma > 0$ is a user-chosen weight parameter, and 
$\rho(t,\cdot)$ is the density of $\xt$ for any $t$ with short hands $\rho(0,\cdot) = \rho_0(\cdot)$ and $\rho(T,\cdot) = \rho_{T}(\cdot)$.
The time evolution of $\rho(t,\cdot)$ is governed by the Fokker-Planck equation
\begin{equation}
    \label{eq:fpe}
    \partial_{t} \rho =  - \nabla \cdot (\rho u) + \frac{1}{2} \langle \nabla^2, \sigma\sigma^{\top} \rho \rangle,
\end{equation}
where $\langle \nabla^2, A(x)\rangle := \sum_{i,j} \partial_{x_i x_j}^2 a_{ij}(x)$ for any matrix-valued function $A(x) = [a_{ij}(x)] \in \Rbb^{d\times d}$.
The goal of \eqref{eq:dc-obj} is to find the optimal control $u: \Omega \to \Rbb^{n}$
% , where $\Omega \subset \Rbb^{d}$ is the state space and $d$ is the state dimension (which could be large), 
that steers the population characterized by $\xt$ with initial distribution $\rho_0$ to approximate the target distribution $\nu$ at terminal time $T$ with minimal overall cost.
The parameter $\gamma$ weighs the matching of terminal density to $\nu$ against the overall cost.
If an exact matching to $\nu$ at $T$ is desired, then $\gamma$ can be cast as the Lagrangian multiplier and solved jointly with $u$. For simplicity, we will only consider the case with soft-penalty on the matching as given in \eqref{eq:dc-obj} in the present work.
In what follows, we will discuss the choice of cost functional $c$ and density distance $D$ to instantiate \eqref{eq:dc-obj}.

\paragraph{Cost functional}
The density control problem \eqref{eq:dc-obj} allows for a general class of cost functional $c: \Omega \times \Rbb^n \to \Rbb$. Typical optimal control problems with minimal energy use $c(x,u) = \frac{1}{2}|u(x)|^2$. If there is a component $u_0(\xt)$ due to environmental force in $u$, then the cost functional can be set to $c(x,u) = \frac{1}{2}|u(x)-u_0(x)|^2$. 
In many real-world applications, the swarm with density $\rho(t,\cdot)$ is realized by a number of robotic agents. 
In such cases, it is necessary that the agents do not collide with each other throughout the control process.
To this end, one can impose additional penalty when any two robots become too close. For example, at any time $t$, such penalty function can be set to (the average should be over all agents different from the input variable $x$ but we use the following for notation simplicity):
\begin{equation}
    \label{eq:interact-potential}
    \frac{1}{N} \sum_{i=1}^{N} V(x,x_{t}^{(i)})\quad  \del{\approx \int V(x,y) \rho(t,y) \, dy},
\end{equation}
where the approximation in the parenthesis holds as $N\to \infty$, and $V$ is an interactive potential defined as $V(x,y) = c\ln |x-y|$ or $V(x,y) = c|x-y|^{-\alpha}$ for some constants $c,\alpha >0$, etc.
The purpose of $V$ is to incur larger cost when two robots become closer.
A cost functional including such interactive potential can effectively prevent robot collisions. 

\paragraph{Distance $D$ between densities}
We use the 1-Wasserstein distance, also known as the Earth Mover's Distance (EMD), as the distance $D$ to measure the difference between the terminal density $\rho_T$ and target density $\nu$.
For any pair of densities $(\varrho,\nu)$, their 1-Wasserstein distance is defined by
\begin{equation}
\label{eq:1W}
    D(\varrho,\nu) = \inf \cbr[2]{ \int |x-y| \, d \pi(x,y): \ \pi \in \Pi(\varrho,\nu)},
\end{equation}
where $\Pi(\rho,\nu)$ denotes the set of joint distributions on $\Omega \times \Omega$ whose marginal densities are exactly $\varrho$ and $\nu$.
The 1-Wasserstein distance defined in \eqref{eq:1W} quantifies the minimum total cost to transfer probability density $\varrho$ to $\nu$ (and vice versa), and its optimal solution $\pi^*$ describes how the transfer should be made.
In real-world applications, the 1-Wasserstein distance is an appropriate metric for the density control problem \eqref{eq:dc-obj}---its value indicates how much additional efforts, measured using the standard Euclidean distance, are needed to move the robots to reach the target distribution in case the matching is not exact at terminal time $T$.
Moreover, 1-Wasserstein distance allows two densities to have different supports, which could be problematic for other density distance measure such as Kullback-Leibler (KL) divergence.

Despite of the various advantages of Wasserstein distance, its computation can be challenging due to the optimization process required in \eqref{eq:1W}. However, for 1-Wasserstein distance, one can derive its dual form given by
\begin{equation}
    \label{eq:emd-dual}
    D(\rho, \nu) = \sup \{\Ebb_{\rho}[\phi] - \Ebb_{\nu}[\phi] : \ \lip(\phi) \le 1\},
\end{equation}
where $\phi$ is the dual variable and $\lip(\phi)$ denotes the Lipschitz constant of $\phi$.
As $\Omega$ is quasi-convex, we can relax the constraint $\lip(\phi)\le 1$ to $\|\nabla \phi(x)\|\le 1$ for all $x \in \Omega$ a.e., where $\nabla \phi(x)$ is the weak derivative of $\phi$ with respect to $x$, and $\|\nabla \phi(x)\|$ is the induced 2-norm of $\nabla \phi(x)$ and thus its maximal singular value. This allows us to use nonlinear reduced-order models, such as deep neural networks, to parameterize $\phi$ and apply various techniques such as spectral normalization to enforce the constraint $\|\nabla \phi(x)\|\le 1$.

\paragraph{Complete model and approximations using deep networks}
The optimization problem for solving the control problem \eqref{eq:dc-obj} is given by
\begin{equation}
    \label{eq:inf-sup-E}
    \inf_{u} \sup_{\phi} \ \Ecal(u, \phi)
\end{equation}
with constraint $\|\nabla \phi(x)\| \le 1$ to hold for $x \in \Omega$ a.e., and the objective functional $\Ecal$ is given by
\begin{equation}
    \label{eq:E-fun}
    \Ecal(u,\phi) := \Ebb\sbr[2]{\int_0^{T} c(\xt,u(\xt))\, dt} + \gamma \Ebb_{\rho_{T}}[\phi] - \gamma \Ebb_{\nu}[\phi] .
\end{equation}
As both $u$ and $\phi$ are in infinite-dimensional function spaces, we need a finite-dimensional representation of them for numerical computation. 
This can be achieved by approximating them using linear or nonlinear reduced-order models.
In particular, we will use deep neural networks to parameterize $u$ and $\phi$ as $\utheta$ and $\phieta$ respectively. Here $\theta,\eta \in \Rbb^{m}$ (we assume both are $m$-dimensional for notation simplicity here, but they can be different in practice) denote their trainable network parameters, such as their weights and biases. 
The approximation power of deep neural networks has been justified by the universal approximation theorem, which has been extended to various cases with different choices of activation functions and function approximation norm. In particular, the following approximation result ensures that $u$ can be approximated by a standard feed-forward network with rectified power unit function (RePU) $a(x)=\max(0,x)^2$ as the activation for sufficient smoothness requirement.
\begin{lemma}[Theorem 4.9 \cite{guhring2021approximation}]
Let $d \in \mathbb{N}$, $B>0$, and $\|u\|_{W^{1,\infty}(\Omega)}\leq B$. For any $\epsilon \in (0,1/2)$, there exists a feed-forward neural network $u_{\theta}$ with properly chosen network parameter $\theta$ and RePU activation such that
\[
\|u_{\theta}-u\|_{W^{1,\infty}(\Omega)} \leq \epsilon.
\]
\label{lem:guhring}
\vspace{-12pt}
\end{lemma}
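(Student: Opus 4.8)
\emph{Proof idea.} The plan is to realize $\utheta$ as the \emph{exact} network implementation of a piecewise-polynomial quasi-interpolant of $u$, so that the entire error comes from a classical spline approximation estimate. First I would place a uniform grid of mesh size $h>0$ on a box containing $\Omega$ and form the tensor-product quadratic B-spline quasi-interpolant $Q_h u=\sum_{k}\lambda_k(u)\,\beta_k$, where $\beta_k(x)=\prod_{i=1}^{d}\beta(h^{-1}x_i-k_i)$ is a shifted, rescaled cardinal quadratic ($C^1$) B-spline and the functionals $\lambda_k$ are chosen so that $Q_h$ reproduces affine polynomials. The number of active basis functions is finite and depends only on $h$, $d$, and $\mathrm{diam}(\Omega)$, and the bound $\|u\|_{W^{1,\infty}}\le B$ keeps the coefficients $\lambda_k(u)$ and all comparison constants uniformly controlled.

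Next I would check that $Q_h u$ is represented exactly by a feed-forward RePU network, contributing no further error. The enabling identities are $a(x)+a(-x)=x^{2}$ for $a(t)=\max(0,t)^2$ (so squaring is one RePU layer) and the polarization formula $xy=\tfrac14\bigl(a(x+y)+a(-x-y)-a(x-y)-a(y-x)\bigr)$ (so exact multiplication is a fixed two-layer RePU block, valid for all real inputs); moreover the univariate B-spline $\beta$ is itself a fixed finite linear combination of translates $a(\cdot-j)$. Composing these: a first RePU layer produces the univariate factors $\beta(h^{-1}x_i-k_i)$, a depth-$\mathcal{O}(\log d)$ binary tree of multiplication blocks forms each product $\beta_k$ exactly, and a final affine layer assembles $\sum_k\lambda_k(u)\beta_k$. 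This yields $\utheta$ of the claimed feed-forward RePU form, with depth and width bounded in terms of $d$ and $h$, and hence in terms of $\epsilon$.

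The main obstacle is the estimate in the \emph{$W^{1,\infty}$} norm rather than merely in $L^\infty$. The $L^\infty$ bound $\|u-Q_h u\|_{L^\infty(\Omega)}\le C B h$ follows at once from affine reproduction, but driving $\|\nabla u-\nabla Q_h u\|_{L^\infty(\Omega)}$ below $\epsilon$ requires controlling the modulus of continuity of $\nabla u$ at scale $h$ and tracking how the B-spline derivatives interact with the quasi-interpolation weights, so that the gradient error vanishes uniformly while the network size stays controlled. This quantitative step, together with the attendant architecture bounds, is precisely what is carried out in \cite{guhring2021approximation}; once $h$ is fixed small enough, the remaining work is the routine bookkeeping of assembling the finitely many blocks into a single layered network and invoking the triangle inequality $\|\utheta-u\|_{W^{1,\infty}}\le\|Q_h u-u\|_{W^{1,\infty}}\le\epsilon$.
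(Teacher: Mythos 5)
The paper itself contains no proof of this lemma: it is imported verbatim as Theorem 4.9 of the cited work of G\"uhring and Raslan, so the only meaningful comparison is with that reference. Your outline --- a tensor-product quadratic B-spline quasi-interpolant reproducing affine functions, exact RePU realization of the truncated powers defining the B-spline, exact multiplication via the polarization identity $xy=\tfrac14\bigl(a(x+y)+a(-x-y)-a(x-y)-a(y-x)\bigr)$, and a binary tree of product blocks --- is indeed the standard constructive route underlying results of that type, so at the level of strategy you are aligned with the source.

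There is, however, a genuine gap, and it sits exactly where you located the ``main obstacle'': under the stated hypothesis $\|u\|_{W^{1,\infty}(\Omega)}\leq B$ the gradient estimate cannot be closed, by your argument or by any other. A RePU network is $C^1$, and $\|\nabla u_{\theta}-\nabla u\|_{L^{\infty}}\leq\epsilon$ for arbitrarily small $\epsilon$ would force $\nabla u$ to have a (essentially) continuous representative; taking $u(x)=|x_1|$, which lies in a $W^{1,\infty}$ ball, shows the approximation error in $W^{1,\infty}$ is bounded below by a fixed positive constant. Equivalently, your plan requires a modulus of continuity for $\nabla u$ at scale $h$, which membership in $W^{1,\infty}$ alone does not provide, so deferring that step to the citation does not repair it: the theorem you would be invoking assumes higher-order smoothness (a ball in $W^{n,\infty}$ with $n\geq 2$, i.e.\ strictly more regularity than the norm in which one approximates), and only then does $\nabla Q_h u\to\nabla u$ uniformly with a rate, with network size growing like a power of $\epsilon^{-1}$ determined by $d$ and $n-1$. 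The lemma as restated in the paper (the phrase ``for sufficient smoothness requirement'' in the surrounding text is the vestige of this hypothesis) drops that assumption, and your sketch inherits the resulting failure. Your construction becomes sound if you strengthen the hypothesis to $u\in W^{n,\infty}(\Omega)$ with $n\geq 2$; as a proof of the statement literally as given, the key gradient step fails.
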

Lemma \ref{lem:guhring} implies that a sufficiently large deep network is capable to represent any function in a specified $W^{1,\infty}$ ball with any prescribed accuracy $\epsilon>0$. Despite of the often pessimetic bound on the network complexity in theoretical analysis, neural networks have been verified as very powerful function approximators in numerous empirical studies in recent years. Hence neural networks have become a special type of nonlinear reduced order models to approximate general functions in infinite-dimensional spaces using parameterized functions with finite-dimensional parameters, and they have proven to be much more effective than traditional linear basis models.

In addition to using finite-dimensional representations $\utheta$ and $\phieta$, we also need to approximate the integrals and expectations in \eqref{eq:E-fun} for numerical computations.
With high dimension $d$, the Fokker-Planck equation \eqref{eq:fpe} (or the continuity equation with $\sigma=0$) cannot be computed using any classical numerical methods (e.g., finite difference and finite element methods) due to the curse of dimensionality. 
In this work, we use the particle dynamics \eqref{eq:dc-sde} if the robots are given and fixed, or simulate a large number of particles following \eqref{eq:dc-sde} otherwise, to avoid direct computation of $\rho(t,\dot)$.
More precisely, we approximate the saddle-point problem \eqref{eq:inf-sup-E} with deep neural network representations $\utheta$ and $\phieta$ and particle dynamics as follows
\begin{equation}
    \label{eq:Ehat}
    \min_{\theta} \max_{\eta} E(\theta, \eta)
\end{equation}
where the objective function given by
\begin{equation*}
    %\label{eq:Ehat}
    E(\theta, \eta) := \frac{1}{N}\sum_{i=1}^{N} \big(c(\xti,\utheta(\xti)) + \gamma \phieta(x^{(i)}_{T}) \big)- \gamma \Ebb_{\nu}[\phieta].
\end{equation*}
The last term in $E$ can also be replaced with empirical expectation $(\gamma/M)\cdot \sum_{j=1}^M \phieta(z^{j})$ for $M$ i.i.d.\ samples $\{z^{(j)}: j\in[M]\}$ drawn from $\nu$.
For simplicity, we also assume that $\theta,\eta \in \Theta$ and $\Theta$ is a compact set in $\Rbb^{d}$.
Then the optimal solution $\theta$ to \eqref{eq:Ehat} is the parameter of $\utheta$ approximating the optimal control $u$ in \eqref{eq:dc-obj}.
\begin{remark} 
If the target distribution $\nu$ are given as $M$ fixed discrete points $Z:=\{z^{(j)} : j \in [M]\}$, then we can also set $D$ to the Chamfer's distance between the two point clouds $X_T:=\{x^{(i)}: i \in [N]\}$ and $Z$: 
\begin{align}
\label{eq:chamfer}
\sum_{i=1}^{N} \min_{1\le j\le M} |z_T^{(i)} - z^{(j)}|^2 + \sum_{j=1}^{M} \min_{1\le i \le N} |x_T^{(i)} - z^{(j)}|^2.
\end{align}
The Chamfer's distance between $X_T$ and $Z$ given in \eqref{eq:chamfer} is a promising alternative to measure the difference between these two clouds when $N$ and $M$ are moderately small (e.g., dozens).
We will show in our experiment that the proposed method can also be applied using this Chamfer distance.
\end{remark}

% We parametrize the control as a neural network, when number of individuals is small, we train the control by minimizing the Chamfer distance. And when number of individuals are getting larger, we train the control by minimizing the Wasserstein distance.

% If the number of agents (robots) $K$ is moderate (no more than hundreds), then it is tractable to design control individually. For simplicity, suppose $X_i(t), U(X) \in \mathbb{R}^2$ for every time $t$. Let $\hat{X}_i$ be the destination locations. Then the optimal control problem is
% \begin{align}
%     \label{eq:loss1}
%     \min_{U} L(U): = \int_{0}^{\top} \underbrace{r(X(t), U(t))}_{\text{running cost}} \, dt + \underbrace{D(X(T), \hat{X})}_{\text{terminal cost}},
% \end{align}
% where $X_i'(t) = U(X_i)$ for $0 \le t \le T$ and $X:=(X_1;\dots;X_K) \in \mathbb{R}^{K\times 2}$ (a matrix with $K$ rows). Here the running cost is
% \begin{align}
% \label{run_cost}
% r(X(t),U(X)) = \sum_{i=1}^{K}\sum_{j \ne i} d(\|X_i(t)-X_j(t)\|) + \sum_{i=1}^{K} \|U(X_i)\|^2, 
% \end{align}
% for penalty function $d$ we use
% \begin{align}
% \label{dis_pen}
% d(z) = (z-d_{\max},0)_+^2 + (d_{\min}-z,0)_+^2,
% \end{align}
% to let $z$ fall in $[d_{\min},d_{\max}]$.

% The terminal cost of measuring differences between point clouds is Chamfer distance:
% \begin{align}
% \label{eq:chamfer}
% D_{\text{Chamfer}}(X,\hat{X}) = \sum_{i} \min_{j} \|X_i - \hat{X}_j\|^2 + \sum_{j} \min_{i} \|X_i - \hat{X}_j\|^2.
% \end{align}

\paragraph{Numerical computations}
An important feature of the saddle-point formulation \eqref{eq:Ehat} for solving the optimal control problem \eqref{eq:dc-obj} is that we can leverage parallel computation to effectively handle the large number of simulated trajectories.
More precisely, for any fixed $\theta$, we can use the Euler-Maruyama (or the Euler method if the diffusion $\sigma=0$) to generate trajectories 
\begin{equation}
    \label{eq:em}
    x_{t+h}^{(i)} = x_{t}^{(i)} + h u(x_{t}^{(i)}) + \sqrt{h} \sigma(x_{t}^{(i)}) \delta_{t}^{(i)}
\end{equation}
for discrete time points $t=0,\dots,\lfloor T/h \rfloor-1$ where $h>0$ is the time step size and $\delta_t^{(i)}$ are i.i.d.\ standard normal random variables for all $t$ and $i$.
Therefore, the objective function $E(\theta,\eta)$ given in \eqref{eq:Ehat} can be computed explicitly.
The we can use automatic differentiation to obtain the gradients of $E$ with respect to $\theta$ and $\eta$, which is crucial to solving the saddle-point problem \eqref{eq:Ehat}.
In particular, we consider the objective function
\begin{equation}
\label{eq:L}
    L(\theta):= \max_{\eta \in \Theta} E(\theta, \eta).
\end{equation}
The optimal solution $\theta$ to \eqref{eq:Ehat} is thus the minimizer of $L(\theta)$, whose full gradient with respect to $\theta$ can be immediately computed by solving the maximization problem \eqref{eq:L}, as shown in the following lemma.

\begin{lemma}[Gradient of $L$]
\label{lem:gradL}
Let $L$ be defined in \eqref{eq:L}. Then for any fixed $\theta$, the gradient $\nabla_{\theta} L(\theta)$ at $\theta$ is given by 
\begin{equation}
    \nabla_{\theta} L(\theta) = \partial_{\theta}E(\theta, \eta(\theta)),
\end{equation}
where $\eta(\theta) \in \argmax_{\eta}\{ E(\theta,\eta): \|\nabla \phieta\|\leq 1\}$.
\end{lemma}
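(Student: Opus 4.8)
This is an instance of Danskin's (envelope) theorem for a parametric maximization, so the plan is to verify its hypotheses in the present setting and then carry out the standard two-sided estimate of the difference quotient of $L$. Write $\Theta_1 := \{\eta \in \Theta : \|\nabla\phieta(x)\| \le 1 \text{ for a.e.\ } x \in \Omega\}$ for the (parameter-independent) feasible set of the inner problem. I would first record the structural facts needed: (i) $\Theta_1$ is a nonempty compact subset of $\Rbb^{m}$, which follows from compactness of $\Theta$ together with lower semicontinuity in $\eta$ of $\eta \mapsto \operatorname{ess\,sup}_{x}\|\nabla\phieta(x)\|$ --- a mild regularity property of the network class (ensured by $C^1$ activations) making the constraint set closed; (ii) with $C^1$ activations (as for RePU, $a(x)=\max(0,x)^2$) and smoothness of $c$ and $\sigma$, differentiating through the Euler--Maruyama recursion \eqref{eq:em} shows $\theta \mapsto (x_T^{(i)})_{i\in[N]}$ is $C^1$, hence $E$ is jointly $C^1$ on $\Theta\times\Theta_1$ with $\partial_\theta E$ jointly continuous; (iii) consequently, for each $\theta$, $E(\theta,\cdot)$ attains its maximum over the compact set $\Theta_1$, so $\eta(\theta) \in \argmax$ exists, and I additionally assume it is unique (the non-unique case is discussed at the end).

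Fix $\theta$ and a direction $v \in \Rbb^{m}$, and set $q(s) := s^{-1}\big(L(\theta+sv) - L(\theta)\big)$ for small $s>0$. For the lower bound, feasibility of $\eta(\theta)$ at $\theta+sv$ gives $L(\theta+sv) \ge E(\theta+sv,\eta(\theta))$, while $L(\theta) = E(\theta,\eta(\theta))$; dividing by $s$ and letting $s\to 0^{+}$, the $C^1$-ness of $E$ in its first argument yields $\liminf_{s\to 0^{+}} q(s) \ge \partial_\theta E(\theta,\eta(\theta))^{\top} v$.

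For the matching upper bound I would argue along sequences. Let $s_k \to 0^{+}$ and $\eta_k := \eta(\theta+s_k v) \in \Theta_1$; by compactness, a subsequence (not relabeled) converges to some $\bar\eta \in \Theta_1$, and passing to the limit in $E(\theta+s_k v,\eta_k) \ge E(\theta+s_k v,\eta)$ for every $\eta\in\Theta_1$, using joint continuity of $E$, gives $\bar\eta \in \argmax_{\eta\in\Theta_1} E(\theta,\eta)$, so $\bar\eta = \eta(\theta)$ by uniqueness. Since $L(\theta) \ge E(\theta,\eta_k)$ and $L(\theta+s_k v) = E(\theta+s_k v,\eta_k)$, the mean value theorem gives, for some $t_k\in(0,1)$,
\[
q(s_k) \;\le\; \frac{E(\theta+s_k v,\eta_k) - E(\theta,\eta_k)}{s_k} \;=\; \partial_\theta E(\theta + t_k s_k v, \eta_k)^{\top} v ,
\]
whose right-hand side tends to $\partial_\theta E(\theta,\eta(\theta))^{\top} v$ as $k\to\infty$, since $\eta_k \to \eta(\theta)$, $\theta+t_k s_k v \to \theta$, and $\partial_\theta E$ is jointly continuous. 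As $\{s_k\}$ was arbitrary, $\limsup_{s\to 0^{+}} q(s) \le \partial_\theta E(\theta,\eta(\theta))^{\top} v$. Combining the two bounds, the one-sided directional derivative of $L$ at $\theta$ along $v$ exists and equals $\partial_\theta E(\theta,\eta(\theta))^{\top} v$; using $-v$ gives the other side, so $L$ has a two-sided directional derivative in every direction, and this derivative is linear in $v$. Hence $L$ is differentiable at $\theta$ with $\nabla_\theta L(\theta) = \partial_\theta E(\theta,\eta(\theta))$.

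The difference-quotient estimate is routine; the real obstacles are the two regularity inputs behind Danskin's theorem. The first is item (ii): $\theta$ enters $E$ both directly through $c(\xti,\utheta(\xti))$ and implicitly through the $\theta$-dependent endpoints $x_T^{(i)}$, so joint $C^1$-ness requires differentiating through the recursion \eqref{eq:em} and bookkeeping the resulting products of Jacobians (plus a dominated-convergence argument for the term $\Ebb_{\nu}[\phieta]$). The second is uniqueness of the inner maximizer $\eta(\theta)$, which is delicate because $E(\theta,\cdot)$ need not be concave once $\phi$ is a neural network; without it one only concludes that $\partial_\theta E(\theta,\eta(\theta))$ lies in the Clarke subdifferential of $L$ --- the convex hull of $\{\partial_\theta E(\theta,\eta) : \eta \in \argmax_{\eta\in\Theta_1} E(\theta,\eta)\}$ --- which is nonetheless exactly what automatic differentiation returns and is adequate for the stochastic optimization used by the algorithm.
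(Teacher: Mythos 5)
Your proof is correct, but it reaches the conclusion by a genuinely different route than the paper. The paper starts from the chain-rule identity \eqref{eq:thm2}, $\nabla_{\theta}L(\theta)=\partial_{\theta}E(\theta,\eta(\theta))+\partial_{\eta}E(\theta,\eta(\theta))\nabla_{\theta}\eta(\theta)$, and then kills the second term by a KKT analysis of the constrained inner problem \eqref{eq:max-E}: it introduces the multiplier $\lambda_{\theta}$, partitions $\Omega$ according to complementary slackness, shows by a measure-theoretic argument that the multiplier term $H_j$ vanishes, and concludes the stronger structural fact $\partial_{\eta}E(\theta,\eta(\theta))=0$, i.e.\ the constrained maximizer is actually an unconstrained stationary point of $E(\theta,\cdot)$. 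That argument implicitly requires a differentiable selection $\theta\mapsto\eta(\theta)$ (so that $\nabla_{\theta}\eta(\theta)$ in \eqref{eq:thm2} makes sense) and the validity of the KKT conditions. You instead invoke Danskin's envelope theorem: since the feasible set of the inner problem does not depend on $\theta$, you get the lower bound on the difference quotient from feasibility of $\eta(\theta)$ at perturbed $\theta$, and the upper bound from a subsequence/mean-value argument using compactness and joint continuity of $\partial_{\theta}E$. This avoids any smoothness of the argmax map and any constraint-qualification or multiplier existence issues, at the price of assuming uniqueness of the inner maximizer (which you flag honestly, noting that without it one only lands in the Clarke subdifferential of $L$). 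Since the paper's hidden assumption of a differentiable selection is at least as strong as your uniqueness assumption, your argument is, if anything, the more economical one; what it does not deliver is the paper's side conclusion that the Wasserstein constraint contributes no multiplier term, i.e.\ $\partial_{\eta}E(\theta,\eta(\theta))=0$, which is the specific insight the paper extracts from the structure of the constraint $\|\nabla\phieta\|\le 1$.
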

\begin{proof}
The full gradient $\nabla_{\theta} L(\theta)$ is given by
\begin{align}
\label{eq:thm2}
    \nabla_{\theta} L(\theta) = \partial_{\theta}E(\theta, \eta(\theta)) + \partial_{\eta} E(\theta, \eta(\theta))\nabla_{\theta}\eta(\theta).
\end{align}
For any fixed $\theta$, consider the problem
\begin{equation}
\label{eq:max-E}
    \max_{\eta}\ \{ E(\theta,\eta): \| \nabla \phi(x) \| \le 1,\ \forall \, x, \ a.e.\}
\end{equation}
The Lagrangian associated with \eqref{eq:max-E} is given by
\begin{align}
    E(\theta, \eta) + \frac{1}{2}\int \lambda_{\theta}(x)(\|\nabla \phieta(x)\|^2 - 1) \,dx,
\end{align}
and the Karush-Kuhn-Tucker (KKT) system for the necessary optimality condition of $\eta(\theta)$ is given by
\begin{subequations}
\begin{align}
& \partial_{\eta_j}E(\theta, \eta(\theta)) + H_{j}(\theta, \eta(\theta)) = 0, \ \forall \, j \in [m],\label{eq:kkt1} \\
& \lambda_{\theta}(x)\geq 0,\ \|\nabla \phi_{\eta(\theta)}(x)\| \leq 1, \ \forall\, x, \ a.e.,
\label{eq:kkt2} \\
& \textstyle\int \lambda_{\theta}(x)(|\nabla \phi_{\eta(\theta)}(x)|^2 - 1)\,dx= 0. \label{eq:kkt3}
\end{align}
\end{subequations}
where the function $H_j$ is defined by
\begin{align*}
    H_{j}(\theta, \eta(\theta)) = \int \lambda_{\theta}(x) {z_{\theta}^x}^{\top}\nabla \phietatheta(x)^{\top} \partial_{\eta_{j}} \nabla \phietatheta(x) z_{\theta}^{x} \,dx, %\label{eq:kkt0}
\end{align*}
and the vector $z_{\theta,x} \in \Rbb^{d}$ is defined by
\begin{equation*}
    z_{\theta,x} = \argmax\{|\nabla \phi_{\eta(\theta)}(x) z| : |z| \le 1\}.
\end{equation*}
Now we partition the state space $\Omega$ into the following disjoint subdomains:
\begin{align}
    \Omega_0:= &\{x: \lambda_{\theta}(x) < 0 \mbox{ or } \|\nabla \phietatheta(x)\| > 1\}, \\
    \Omega_1:= &\{x: \lambda_{\theta}(x) > 0 \mbox{ and } \|\nabla \phietatheta(x)\| < 1\}, \\
    \Omega_2:= &\{x: \lambda_{\theta}(x) =0 \}, \\
    \Omega_3:= &\{x: \lambda_{\theta}(x) > 0 \mbox{ and } \|\nabla \phietatheta(x)\| = 1\}.
\end{align}
Due to \eqref{eq:kkt2}, we know $\mu(\Omega_0) = 0$. 
Furthermore, we can see that 
\begin{equation}
\label{eq:integrand-nonpos}
\lambda_{\theta}(x) (\|\nabla \phietatheta(x)\|^2 - 1) \le 0    
\end{equation}
for all $x \in \Omega \setminus \Omega_0$. In particular, the equality in \eqref{eq:integrand-nonpos} holds for all $x$ in $\Omega_2$ and $\Omega_3$.

Now we claim that $\mu(\Omega_1) = 0$.
If not, we define for any $k,\ell \in \Nbb$ the set
\begin{equation*}
    \Omega_1^{k,\ell} := \cbr[2]{x: \lambda_{\theta}(x) > \frac{1}{k}} \cap \cbr[2]{x:\|\nabla \phietatheta(x)\|^2 < 1- \frac{1}{\ell}}.
\end{equation*}
Then it is easy to show that
\begin{equation}
    \Omega_1 = \cup_{k=1}^{\infty} \cup_{\ell=1}^{\infty} \Omega_{1}^{k,\ell},
\end{equation}
which is a countable union of sets. Hence, there must exist $k',\ell' \in \Nbb$ such that
$\mu(\Omega_{1}^{k',\ell'}) >0$. Combining this fact and \eqref{eq:integrand-nonpos}, we obtain
\begin{align*}
   &\textstyle\int \lambda_{\theta}(x) (\|\nabla \phietatheta(x)\|^2 - 1)\,dx \\
   =& \textstyle\int_{\Omega_1} \lambda_{\theta}(x) (\|\nabla \phietatheta(x)\|^2 - 1) \,dx\\
   \le& \textstyle\int_{\Omega_1^{k',\ell'}} \lambda_{\theta}(x) (\|\nabla \phietatheta(x)\|^2 - 1)\,dx \\
   \le& - \textstyle\frac{1}{k'\ell'} < 0,
\end{align*}
which contradicts to \eqref{eq:kkt3}. Therefore $\mu(\Omega_1) = 0$.

Since $\|\nabla \phietatheta(x) \|^2 = 1$ for all $x \in \Omega_3$, the following equality holds over $\Omega_3$:
\begin{align*}
    \partial_{\eta_j} \|\nabla \phietatheta(x)\|^2 = 2z_{\theta}^x{}^{\top}\nabla \phietatheta(x)^{\top} \partial_{\eta_{j}} \nabla \phietatheta(x) z_{\theta}^{x} = 0.
\end{align*}
Together with the definition of $\Omega_3$ and $\mu(\Omega_0) = \mu(\Omega_1) = 0$, we see that $H_j(\theta,\eta(\theta)) = 0$.
Combining this and \eqref{eq:kkt1}, we have $\partial_{\eta}E(\theta,\eta(\theta))=0$. Therefore by \eqref{eq:thm2} we have $\nabla_{\theta}L(\theta) = \partial_{\theta}E(\theta, \eta(\theta))$, which completes the proof.
% The complementary slackness condition \eqref{eq:kkt2} implies that
% \begin{align}
%     \nabla_{\theta}\mu(\theta)(||\nabla f_{\eta}(\theta)|| - 1) + \mu(\theta)\nabla_{\theta}||\nabla f_{\eta}(\theta)|| = 0.
%     \label{eq:kkt4}
% \end{align}
%
% If $\mu_(\theta) = 0$, then we know $\partial_{f_{\eta}}E(\theta, f_{\eta}(\theta))=0$ due to \eqref{eq:kkt1} and hence \eqref{eq:thm2} reduces to $\nabla_{\theta} L(\theta) = \partial_{\theta}E(\theta, f_{\eta}(\theta))$. If $\mu_(\theta) > 0$, then from \eqref{eq:kkt2} we know $||\nabla f_{\eta}(\theta)|| - 1 = 0$, which further implies $\nabla_{\theta}||\nabla f_{\eta}(\theta)|| = 0$ from \eqref{eq:kkt4}, thus we see $\nabla_{\theta} L(\theta) = \partial_{\theta}E(\theta, f_{\eta}(\theta))$ from \eqref{eq:thm2}.
\end{proof}

Lemma \ref{lem:gradL} suggests a practical implementation to compute $\nabla_{\theta} L(\theta)$: calculate the partial derivative of $E(\theta,\eta)$ with respect to $\theta$ with $\eta$ held fixed, then substitute $\eta$ by the maximizer $\eta(\theta)$ of the problem \eqref{eq:L}.
Since \eqref{eq:L} is constrained, the \textit{gradient mapping}, defined by $\Gcal(\theta):= \tau^{-1}[\theta-\Pi(\theta-\tau \nabla_{\theta} L(\theta))]$, is used as the convergence criterion of $\theta$ \cite{ghadimi2016mini-batch,li2018simple,reddi2016proximal}.
Note that the definition of gradient mapping takes the normalization of step size $\tau$ into consideration.
Moreover, the gradient mapping reduces to the gradient $\nabla_{\theta}L(\theta)$ for unconstrained case.
In the following theorem, we show that for any $\varepsilon>0$ the standard stochastic gradient descent scheme based on $\nabla_{\theta} L(\theta)$ (i.e., stochastic gradient is unbiased and has bound variance) is guaranteed to push the gradient mapping $\Gcal(\theta)$ to $0$ with $\varepsilon$ accuracy within $O(\varepsilon^{-1})$ iterations in the ergodic sense.
For simplicity, we assume both $\theta$ and $\eta$ are of the same dimension $m$ and lie in the same closed Euclidean ball in $\Rbb^{m}$. The results can be easily generalized to the case where they have different dimensions $m$ and $m'$ and are in some convex compact subsets of $\Rbb^{m}$ and $\Rbb^{m'}$ respectively.
\begin{theorem}
\label{thm:iteration}
Suppose the parameters $\theta$ and $\eta$ are bounded in a ball centered at origin with radius $R>0$, namely, we have $\Theta:=\{\theta: |\theta|\leq R\}$. For any $\varepsilon > 0$, let $\{\theta_j\}$ be a sequence of the network parameters of $u_{\theta}$ generated by the stochastic gradient descent algorithm, where $\nabla_{\theta} L(\theta)$ is approximated by the empirical expectation using samples as in \eqref{eq:Ehat}. If the sample complexity is $N = O(\varepsilon^{-1})$ in each iteration, then $min_{1\leq j \leq J}\mathbb{E}[|\Gcal(\theta_j)|^2]\leq\varepsilon$ after $J=O(\varepsilon^{-1})$ iterations.
\end{theorem}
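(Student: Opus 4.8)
The plan is to cast this as a standard nonconvex stochastic optimization result for the function $L(\theta)$ defined in \eqref{eq:L}, using Lemma~\ref{lem:gradL} to reduce the saddle-point problem to a single-variable minimization whose (projected) stochastic gradients we can control. First I would establish the structural properties of $L$ that make the classical complexity bound applicable: (i) $L$ is bounded below on the compact set $\Theta$, since $c\ge 0$ and the Kantorovich potential terms are bounded because $\phi_\eta$ is $1$-Lipschitz with $\eta$ ranging over a compact set and $\Omega$ is bounded; (ii) $\nabla_\theta L$ is Lipschitz continuous on $\Theta$ with some constant $L_0$ — this follows from the smoothness of $c$, of the Euler–Maruyama map $\theta\mapsto x_T^{(i)}$ (a finite composition of smooth maps since the number of steps $\lfloor T/h\rfloor$ is finite), and of the RePU network $\phi_\eta$, together with a Danskin/envelope argument (Lemma~\ref{lem:gradL} already identifies $\nabla_\theta L(\theta)=\partial_\theta E(\theta,\eta(\theta))$, and standard perturbation bounds on the argmax give continuity of $\eta(\theta)$). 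I would state this as the ``$L$-smoothness'' hypothesis, pointing to \cite{guhring2021approximation} and Lemma~\ref{lem:guhring} for why the network class keeps these constants finite.

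Next I would analyze the stochastic gradient estimator. Replacing the expectations in $E(\theta,\eta)$ by the $N$-sample empirical averages in \eqref{eq:Ehat} yields an estimator $g(\theta;\xi)$ of $\nabla_\theta L(\theta)$ obtained by differentiating through the trajectories; I would argue it is unbiased (for a fixed maximizer, differentiation and finite-sample averaging commute, and the envelope theorem removes the $\partial_\eta$ contribution exactly as in the proof of Lemma~\ref{lem:gradL}) and has variance bounded by $\sigma_0^2/N$ for a constant $\sigma_0^2$ depending only on $R$, $T$, $h$, the Lipschitz/boundedness constants of $c$, $\sigma$, and the network — again finite because everything lives on compact sets. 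So choosing $N=O(\varepsilon^{-1})$ forces the per-iteration variance down to $O(\varepsilon)$.

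With these two ingredients in hand, the remainder is the textbook projected-SGD argument for nonconvex problems \cite{ghadimi2016mini-batch,reddi2016proximal}: using the $L_0$-smoothness descent inequality together with the three-point property of the projection $\Pi$ onto the ball $\Theta$, one gets a per-iteration decrease of the form $\mathbb{E}[L(\theta_{j+1})]\le \mathbb{E}[L(\theta_j)] - c_1\tau\,\mathbb{E}[|\Gcal(\theta_j)|^2] + c_2\tau^2(\sigma_0^2/N)$ for a fixed step size $\tau\sim 1/L_0$. Summing over $j=1,\dots,J$, telescoping, and dividing by $J$ bounds $\min_{1\le j\le J}\mathbb{E}[|\Gcal(\theta_j)|^2]$ by $\big(L(\theta_1)-\inf L\big)/(c_1\tau J) + (c_2\tau/c_1)(\sigma_0^2/N)$; with $N=O(\varepsilon^{-1})$ and $J=O(\varepsilon^{-1})$ both terms are $O(\varepsilon)$, which is the claim.

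The main obstacle I anticipate is not the SGD bookkeeping but verifying the $L_0$-smoothness of $L$ and the boundedness of the gradient variance in a fully rigorous way: this requires (a) continuity — ideally Lipschitzness — of the maximizer map $\theta\mapsto\eta(\theta)$, which is delicate because the constraint set $\{\|\nabla\phi_\eta\|\le 1\}$ is an infinite family of constraints and the maximizer need not be unique, and (b) control of the derivatives of the Euler–Maruyama flow and of the RePU network $\phi_\eta$ (whose activation $\max(0,x)^2$ is only $C^1$, so second derivatives are merely bounded a.e.). I would handle (a) by working with the value function directly and invoking a Danskin-type theorem under a mild regularity/uniqueness assumption (or by passing to a measurable selection and using that $\partial_\theta E(\theta,\cdot)$ is constant on the argmax set, which is essentially what Lemma~\ref{lem:gradL} shows), and handle (b) by the compactness of $\Theta$ and $\Omega$ to get uniform bounds, absorbing all the resulting constants into $L_0$ and $\sigma_0^2$.
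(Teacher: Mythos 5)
Your proposal follows essentially the same route as the paper's proof: reduce to minimization of $L$ via Lemma \ref{lem:gradL}, assert $M$-Lipschitz continuity of $\nabla_\theta L$ on the compact parameter set, bound the stochastic-gradient variance by $O(1/N)$ from Monte-Carlo sampling, and then run the standard projected-SGD descent/telescoping argument on the gradient mapping with $\tau \sim 1/M$ and $N=J=O(\varepsilon^{-1})$. If anything, you are more explicit than the paper about the delicate points (regularity of the maximizer map $\theta\mapsto\eta(\theta)$ and unbiasedness of the estimator), which the paper simply asserts; the substance of the argument is the same.
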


\begin{proof}
Since $u_{\theta}$ and $\phi_{\eta}$ are parameterized using neural networks with finitely many neurons, and the parameters are bounded, we know by Lemma \ref{lem:gradL} that $\nabla L(\theta)$ is globally $M$-Lipschitz continuous on $\Theta$ for some $M>0$.
Following the procedure of the projected stochastic gradient descent (SGD) scheme, started from any initial $\theta_0$, the sequence $\{\theta_j\}$ generated by the scheme is
\begin{align}
\label{eq:th2_1}
    \theta_{j+1} = \Pi(\theta_j - \tau G_j) = \argmin_{\theta \in \Theta}(G_j^{\top}\theta + \frac{1}{2\tau}|\theta - \theta_j|^2),
\end{align}
where $G_j$ denotes the stochastic gradient of $L(\theta)$ at $\theta_j$ at the $j$th iteration using a mini-batch of samples, i.e., the average of the gradient over a mini-batch of the samples to approximate the full average \eqref{eq:Ehat}, $\Pi$ denotes the projection of $\theta$ to $\Theta$, and $\tau$ is the learning rate. Let $g_j:=\nabla_{\theta}L(\theta_j)$ denote the full (but unknown) gradient of $L$ at $\theta_j$, then we define the auxiliary sequence $\{\bar{\theta}_j\}$ generated by $g_j$ as follows:
\begin{align}
\label{eq:th2_2}
    \bar{\theta}_{j+1} = \Pi(\theta_j - \tau g_j) = \argmin_{\theta \in \Theta}(g_j^{\top}\theta + \frac{1}{2\tau}|\theta - \theta_j|^2).
\end{align}
Note that the sequence $\{\bar{\theta}\}_j$ is only for proof but not necessarily computed in practice (in fact $g_j$ can be the actual gradient of the exact integral and expectations in \eqref{eq:E-fun} and the proof below still follows).
To prove the convergence of the projected SGD iterations, first we utilize the property of $M$-Lipschitz continuity of $\nabla_{\theta}L(\theta)$ which implies that
\begin{align}
\label{eq:th2_3}
    L(\theta_{j+1}) \leq L(\theta_j) + g_j^{\top}e_j + \frac{M}{2}|e_j|^2,
\end{align}
and
\begin{align}
\label{eq:th2_4}
    -L(\bar{\theta}_{j+1}) \leq -L(\theta_j) - g_j^{\top}\bar{e}_j + \frac{M}{2}|\bar{e}_j|^2,
\end{align}
where we denote $e_j:=\theta_{j+1}-\theta_j$, $\bar{e}_j:=\bar{\theta}_{j+1}-\theta_j$ for all $i$. Due to the optimality of $\theta_{j+1}$ in \eqref{eq:th2_1}, we have
\begin{align}
\label{eq:th2_5}
    0 & \leq (G_j + \frac{\theta_{j+1}-\theta_j}{\tau})^{\top}(\bar{\theta}_{j+1}-\theta_{j+1}) \nonumber \\
    & = (G_j + \frac{e_j}{\tau})^{\top}(\bar{e}_j -e_j).
\end{align}
Combining \eqref{eq:th2_3}, \eqref{eq:th2_4} and \eqref{eq:th2_5} yields
\begin{align}
\label{eq:th2_6}
    & L(\theta_{j+1}) - L(\bar{\theta}_{j+1}) \leq (g_j - G_j)^{\top}(e_j - \bar{e}_j) \\ 
    & \qquad \qquad + \frac{e_j^{\top}(\bar{e}_j-e_j)}{\tau}+\frac{M}{2}|e_j|^2+\frac{M}{2}|\bar{e}_j|^2. \nonumber
\end{align}
Again due to the $M$-Lipschitiz continuity of $\nabla_{\theta} L(\theta)$, we have that
\begin{align}
\label{eq:thetabar-lip}
    L(\bar{\theta}_{j+1}) \leq L(\theta_j) + g_j^{\top}\bar{e}_j + \frac{M}{2}|\bar{e}_j|^2.
\end{align}
By the optimality of $\hat{\theta}_{j+1}$ in \eqref{eq:th2_2} with $g_j$, we also have
\begin{align}
\label{eq:thetabar-opt}
    0 & \leq (g_j + \frac{\bar{\theta}_{j+1}-\theta_j}{\tau})^{\top}(\theta_{j} - \bar{\theta}_{j+1}) =- (g_j + \frac{\bar{e}_j}{\tau})^{\top}\bar{e}_j.
\end{align}
Combining \eqref{eq:thetabar-lip} and \eqref{eq:thetabar-opt}, we obtain
\begin{align}
\label{eq:th2_7}
L(\bar{\theta}_{j+1}) - L(\theta_j) \leq -(\frac{1}{\tau} - \frac{M}{2})|\bar{e}_j|^2.
\end{align}
We add \eqref{eq:th2_6} and \eqref{eq:th2_7} which yields
\begin{align}
\label{eq:th2_8}
    & L(\theta_{j+1}) - L(\theta_j) \leq (g_j - G_j)^{\top}(e_j - \bar{e}_j)  \\ 
    & \qquad \qquad \quad + \frac{e_j^{\top}(\bar{e}_j-e_j)}{\tau}+\frac{M}{2}|e_j|^2-(\frac{1}{\tau} - M)|\bar{e}_j|^2. \nonumber
\end{align}
Now due to the Cauchy-Schwarz inequality and the definitions of $\theta_{j+1}$ and $\bar{\theta}_{j+1}$ in \eqref{eq:th2_1} and \eqref{eq:th2_2}, we show that
\begin{align}
    & (g_j - G_j)^{\top}(e_j - \bar{e}_j) \nonumber \\
    &= (g_j - G_j)^{\top}(\theta_{j+1} - \bar{\theta}_{j+1}) \nonumber  \\
    & = (g_j - G_j)^{\top}(\Pi(\theta_j-\tau G_j)- \Pi(\theta_j - \tau g_j)) \label{eq:th2_9}\\
    &\leq |g_j - G_j||\Pi(\theta_j-\tau G_j)- \Pi(\theta_j - \tau g_j)| \nonumber \\
    & \leq \tau|g_j-G_j|^2, \nonumber 
\end{align}
where the last inequality is due to the fact that the projection $\Pi$ onto the convex set $\Theta$ is non-expansive. Moreover we notice that
\begin{align}
\label{eq:th2_10}
    \frac{e_j^{\top}(\bar{e}_j-e_j)}{\tau} = \frac{1}{2\tau}(|\bar{e}_j|^2 - |e_j|^2 - |e_j - \bar{e}_j|^2).
\end{align}
Substituting \eqref{eq:th2_9} and \eqref{eq:th2_10} into \eqref{eq:th2_8} we have
\begin{align}
\label{eq:th2_11}
    L(\theta_{j+1}) - L(\theta_j) \leq \tau|g_j-G_j|^2 - (\frac{1}{2\tau}-M)|\bar{e}_j|^2
    \\ \quad -(\frac{1}{2\tau}-\frac{M}{2})|e_j|^2-\frac{1}{2\tau}|e_j-\bar{e}_j|^2. \nonumber
\end{align}
We denote $\sigma^2$ the variance of the stochastic gradient $G_j$, i.e., $\mathbb{E}[|G_j - g_j|^2] \leq \varepsilon$. Note that by the sampling complexity of Monte-Carlo integration there is $\sigma^2 =C N^{-1}$ for some $C>0$ dependent on the maximum value of $L(\theta)$ (which is bounded due to the continuity of $L$ and boundedness of $\Theta$) but not on $N$. 
Taking expectation on both sides of \eqref{eq:th2_11} and reordering lead to
\begin{align}
\label{eq:th2_13}
    &(\frac{1}{2\tau}-M)\mathbb{E}[|\bar{e}_j|^2]\leq \mathbb{E}[L(\theta_j)]-\mathbb{E}[L(\theta_{j+1})] \\ &\qquad  \qquad \qquad +\tau \sigma^2 - (\frac{1}{2\tau}-\frac{M}{2})\mathbb{E}[|e_j|^2], \nonumber
\end{align}
Recall the definition of the gradient mapping, we have
\begin{align}
    \Gcal(\theta_j) &= \tau^{-1}[\theta_{j} - \Pi(\theta_{j} - \tau g_j)] \label{eq:gmap}\\
    & = \tau^{-1}(\theta_{j} - \bar{\theta}_{j+1}) = -\tau^{-1}\bar{e}_j. \nonumber
\end{align}
Combining \eqref{eq:th2_13} and \eqref{eq:gmap}, taking sum over $j=1,2,...,J$, dividing both sides by $(\frac{1}{2\tau}-M)\tau J$, setting $\tau = \frac{1}{4M}$, and noticing $\sigma^2 = C/N$, we obtain
\begin{align}
    \label{eq:th2_14}
    \min_{1\le j \le J} \mathbb{E}&[|\Gcal(\theta_j)|^2] \le \frac{1}{J}\sum_{j=1}^J\mathbb{E}[\Gcal(\theta_j)|^2] \\
    &\leq\frac{16M(\mathbb{E}[L(\theta_1)]-\mathbb{E}[L(\theta_{J+1})])}{J} + \frac{4C}{N}.  \nonumber
\end{align}
Notice that $\mathbb{E}[L(\theta_{J+1})] \ge L^*:= \min_{\theta \in \Theta} L(\theta)$. By choosing $N=J=[16M(\Ebb[L(\theta_1)-L^*)+4 C]\varepsilon^{-1}=O(\varepsilon^{-1})$, we obtain that
$\min_{1\le j \le J} \mathbb{E}[|\Gcal(\theta_j)|^2] \le \varepsilon$, as claimed.
\end{proof}
In practice, we found that using a small number of iterations to solve the inner maximization problem with any fixed $\theta$ appears to perform the best.
The pseudo code to solve the saddle-point problem \eqref{eq:Ehat} is given in Algorithm \ref{alg:large_n}. The symbol $\hat{\nabla}$ and $\hat{\partial}$ stand for stochastic gradient and partial derivative using mini-batch samples of $X_{t}$.
The selection of parameters will be given in Section \ref{sec:result}.
\begin{algorithm}[ht]
\caption{Optimal Density Control (ODC)}
\label{alg:large_n}
\begin{algorithmic}[1]
\STATE {\bfseries Input:} Samples $X_{0} = \{x_{0}^{(i)}:i\in [N]\}$ following initial distribution $\rho_{0}$, samples following target distribution $\nu$, time horizon $T > 0$, step size $h$, maximum outer and inner iteration numbers $K_1$ and $K_2$. 
\STATE {\bfseries Initialize:} Neural networks $\utheta, \phieta$.
\FOR{$k_{out}=1,\dots,K_1$}
\STATE Generate trajectories $X_{t} = \{x_{t}^{(i)}:i\in [N]\}$ using \eqref{eq:em} with $0 \le t \le T$.
\STATE Compute running cost $E(\theta,\eta)$ in \eqref{eq:Ehat}.
\FOR{$k_{inn} = 1,\dots, K_2$}
\STATE Update $\eta \leftarrow \eta + \tau \hat{\nabla}\{\mathbb{E}_{\rho_{X_T}}[\phi_{\eta}]-\mathbb{E}_{\nu}[\phi_{\eta}]\}$.
\ENDFOR
\STATE Update $\theta \leftarrow \theta - \tau \hat{\partial_{\theta}} E(\theta,\eta)$.
\ENDFOR
\STATE {\bfseries Output:} $\utheta$.
\end{algorithmic}
\end{algorithm}
\vspace{-12pt}

% \subsection{EXTENSION}
% \label{sec:ext}
% We can readily extend the proposed method to the case with time-dependent control $u$.

\section{EXPERIMENTS}
\label{sec:result}
In this section we validate Algorithm \ref{alg:large_n} (ODC) on both synthetic and realistic data sets. For specific examples, we also test Algorithm \ref{alg:large_n} using Chamfer distance \ref{eq:chamfer}, refered to as ODC-Chamfer. Note that it is easier to implement ODC-Chamfer since the Chamfer distance has a closed-form expression and hence the inner iteration \ref{alg:large_n} of ODC can be eliminated.

\subsection{Synthetic Data}
We first evaluate our model on several synthetic data sets, termed by \textbf{Synthetic-1}, \textbf{Synthetic-2} and \textbf{Synthetic-3}. The state spaces of Synthetic-1 and Synthetic-2 are 3 and 100 dimensional, while Synthetic-3 is is 2 dimensional respectively. %Details about the test settings are given below.
We set the control $\utheta = \nabla \psi_{\theta}$, where $\psi_{\theta}$ is a fully connected neural network with 3 hidden layers and 36 nodes per layer. We also set the dual function $\phieta$ as a fully-connected neural network with 6 hidden layers and 256 nodes per layer. We use tanh as the activation function for all layers. We implement and test Algorithm \ref{alg:large_n} using PyTorch and train the networks using the builtin optimizer Adam \cite{kingma2014adam} with learning rate $10^{-4}$. This optimizer appears to have improved efficiency compared to SGD empirically. Furthermore, we use spectral normalization to enforce $\lVert \nabla \phieta \rVert \leq 1$ \cite{spectral}. We initialize all network parameters with Xavier initialization \cite{xvaier} and train $\utheta$ according to Algorithm \ref{alg:large_n} ODC (or ODC-Chamfer if the number of agents is small and the target locations are fixed). When ODC is used, we set the size of simulated agents at each time point as $N=2,000$, and use $1,500$ points as the training set and the other $500$ data points for testing. The step size $h$ and time horizon $T$ are set to $0.1$ and $1$ respectively, we choose outer iteration number as $10^4$ and inner iteration number for approximating Wasserstein distance as $6$, for all synthetic experiments. 
%\ye{Choice of iteration numbers?}

\vspace{-9pt}
\noindent\textbf{Synthetic-1:}
We first apply Algorithm \ref{alg:large_n} ODC with noise term in a state space of dimension $d=3$ for two instances (two different pairs of initial-target densities), denoted by Syn-1-a and Syn-1-b, respectively. The initial and target distributions are set to mixtures of Gaussians. 
%
% More precisely, the initial in Syn-1-a is the average of two Gaussians with means XXX and XXX and variances XXX and XXX respectively, and the target is the average of two Gaussians with means XXX and XXX and variances XXX and XXX respectively. 
%
%The initial in Syn-1-a is the average of two Gaussians with means XXX and XXX and variances XXX and XXX respectively, and the target is the average of two Gaussians with means XXX and XXX and variances XXX and XXX respectively.
%
We set the perturbation noise $\sigma=0.01$. 
The controlled density and target density are respectively shown as the green and blue point clouds in Figure \ref{fig:syn1}, which shows that the controlled density in green correctly moved from the initial at $t=0$ to the blue target density at $t=1$ in the presence of perturbations in both Syn-1-a and Syn-1-b.
%
% As we see, the samples gradually move to the target positions under the learned control $\utheta$ with movements of the agents perturbed by noises. 
%We also notice that even we choose Chamfer distance as our objective function, the initial points are not being moved to fit the target points perfectly, but close enough to satisfy the similarity on distribution level.%

\begin{figure}[ht!]
    \centering
     \subfloat[][Syn-1-a:t=0]{\includegraphics[width=.25\linewidth]
     {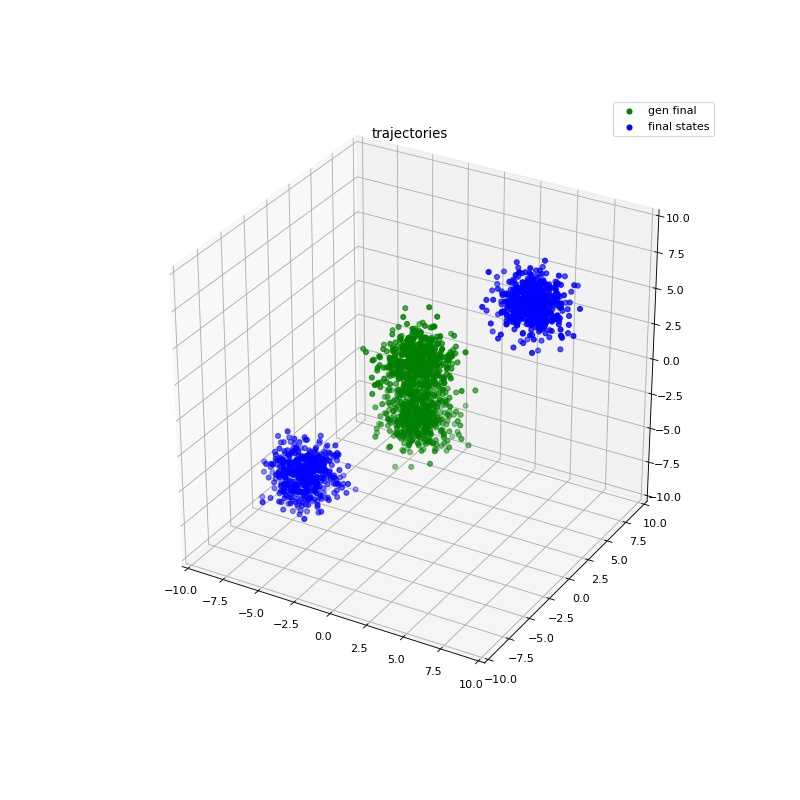}}
     \subfloat[][Syn-1-a:t=1]{\includegraphics[width=.25\linewidth]{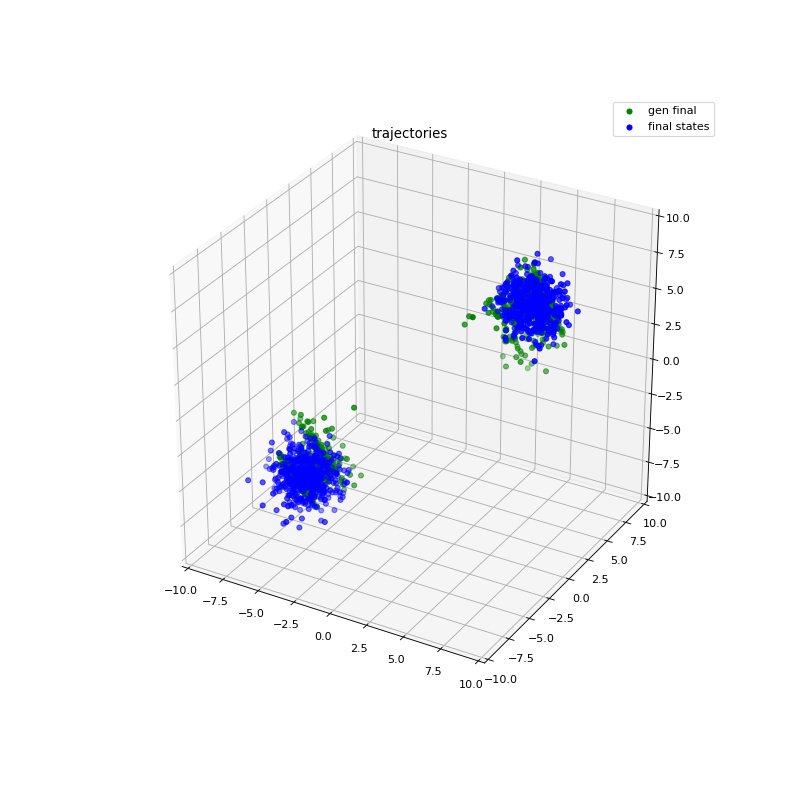}}
     \subfloat[][Syn-1-b:t=0]{\includegraphics[width=.25\linewidth]{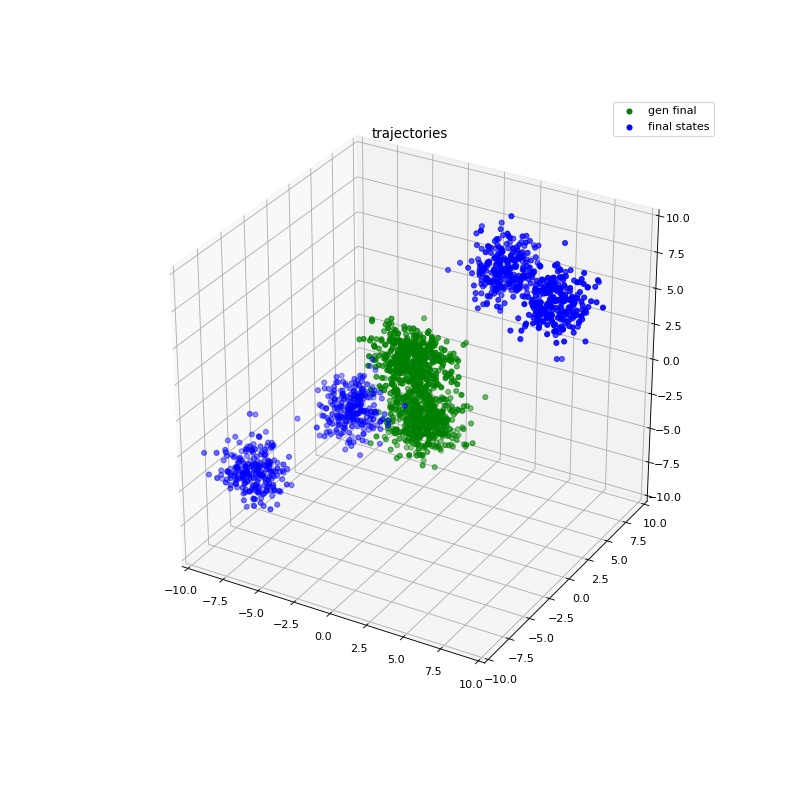}}
      \subfloat[][Syn-1-b:t=1]{\includegraphics[width=.25\linewidth]{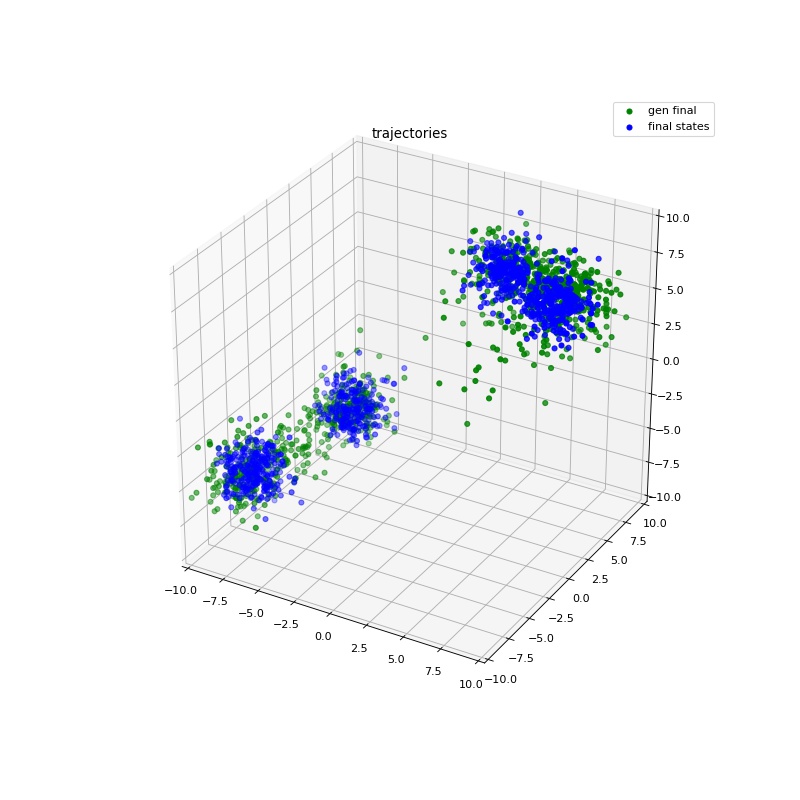}}
    % \vspace{-0.5em}
\caption{Results of ODC on Synthetic-1 datasets for two instances. The green point clouds indicate the controlled density, and the blue cloud points indicate the target density. (a) and (b) show the densities at $t=0$ and $t=1$ respectively for the first instance. (c) and (d) show for the second instance.}
% \vspace{-1.5em}
\label{fig:syn1}
\end{figure}

\vspace{-9pt}
\noindent\textbf{Synthetic-2:}
We test the control using a large number of agents in a state space of dimension $d=100$. We simulate 2 pairs of initial-target densities and name them as Syn-2-a and -b. For Syn-2-a and Syn-2-b, we plot the controlled density in green point clouds and target in blue as above, and show the projection of them onto the first three coordinates for $t=0$ and $t=1$ in Figure \ref{fig:syn2} (a) and (b) respectively.

\begin{figure}[ht!]
    \centering
     \subfloat[][Syn-2-a:t=0]{\includegraphics[width=.25\linewidth]{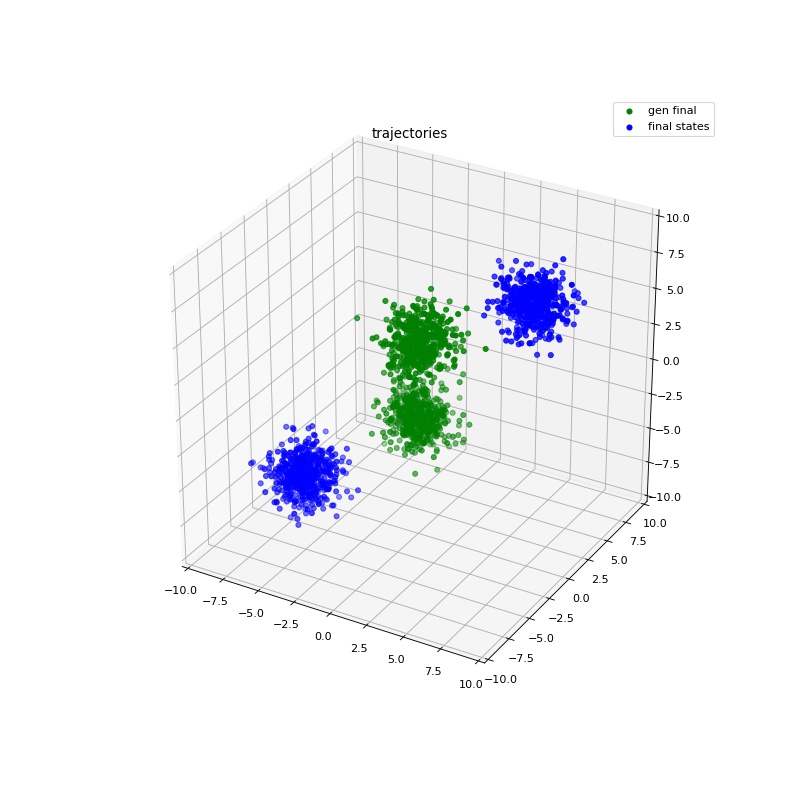}}
     \subfloat[][Syn-2-a:t=1]{\includegraphics[width=.25\linewidth]{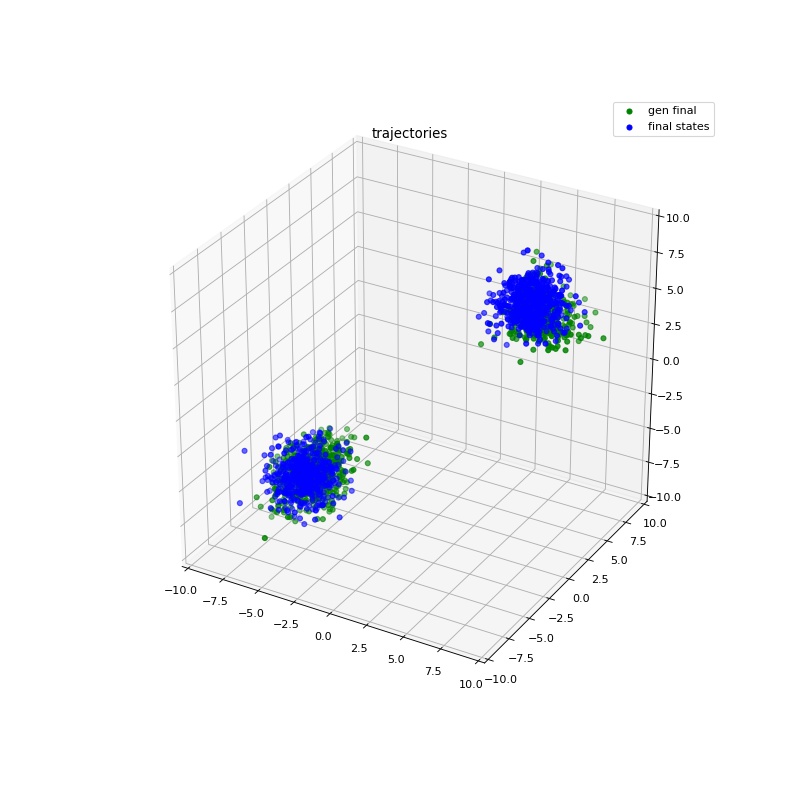}}
     \subfloat[][Syn-2-b:t=0]{\includegraphics[width=.25\linewidth]{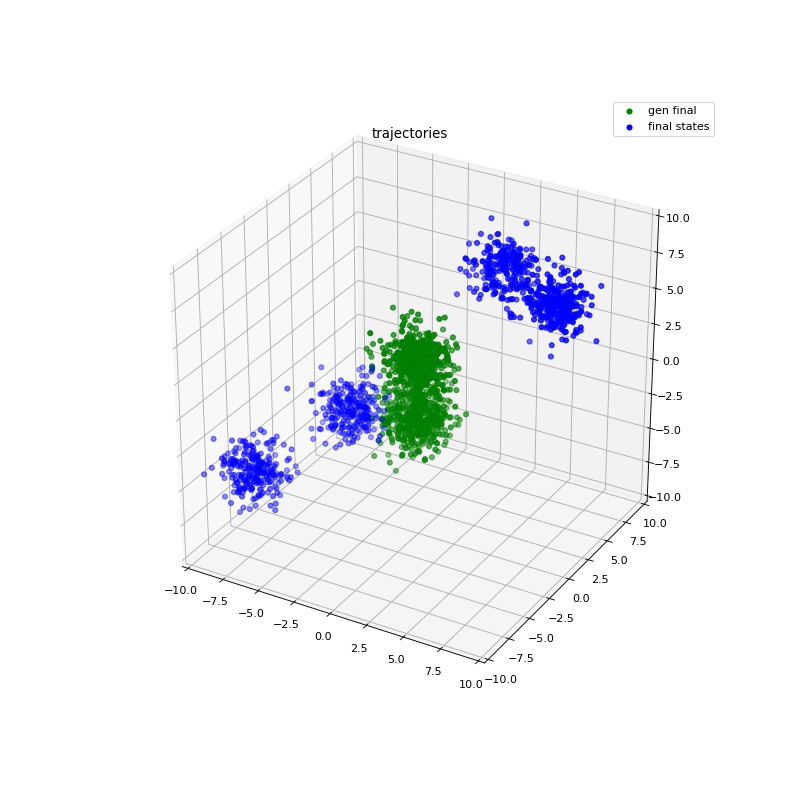}}
      \subfloat[][Syn-2-b:t=1]{\includegraphics[width=.25\linewidth]{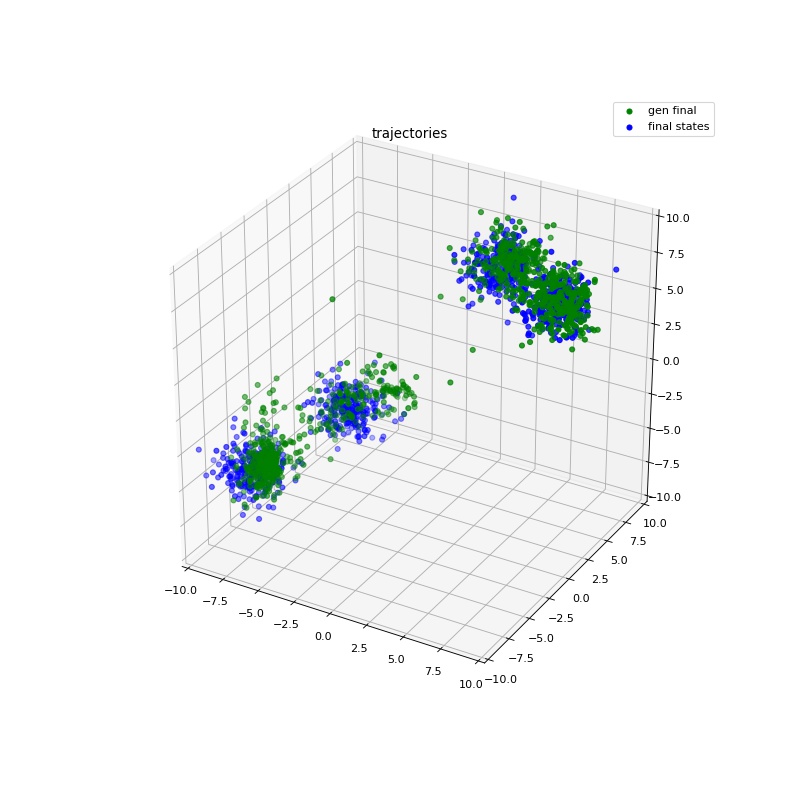}}\\
%     \subfloat[][Syn-2-2c:t=0]{\includegraphics[width=.25\linewidth]{}}
%     \subfloat[][Syn-2-2c:t=1]{\includegraphics[width=.25\linewidth]{}}
%    \subfloat[][Syn-2-2d:t=0]{\includegraphics[width=.25\linewidth]{}}
%     \subfloat[][Syn-2-2d:t=1]{\includegraphics[width=.25\linewidth]{}}
    % \vspace{-0.5em}
    \vspace{-3pt}
\caption{Results of ODC on Synthetic-2 datasets for four instances of dimension 100 as projections onto the first three dimensions (top row) and first two dimensions (bottom row). Note that we used ResNet and Normalizing Flow to parameterize the control $u$ in Syn-2-a and Syn-2-b respectively in the bottom row.}
% \vspace{-1.5em}
\label{fig:syn2}
\end{figure}

\begin{figure}[ht!]
    \centering
     \subfloat[][Syn-3:t=0]{\includegraphics[width=.25\linewidth]
     {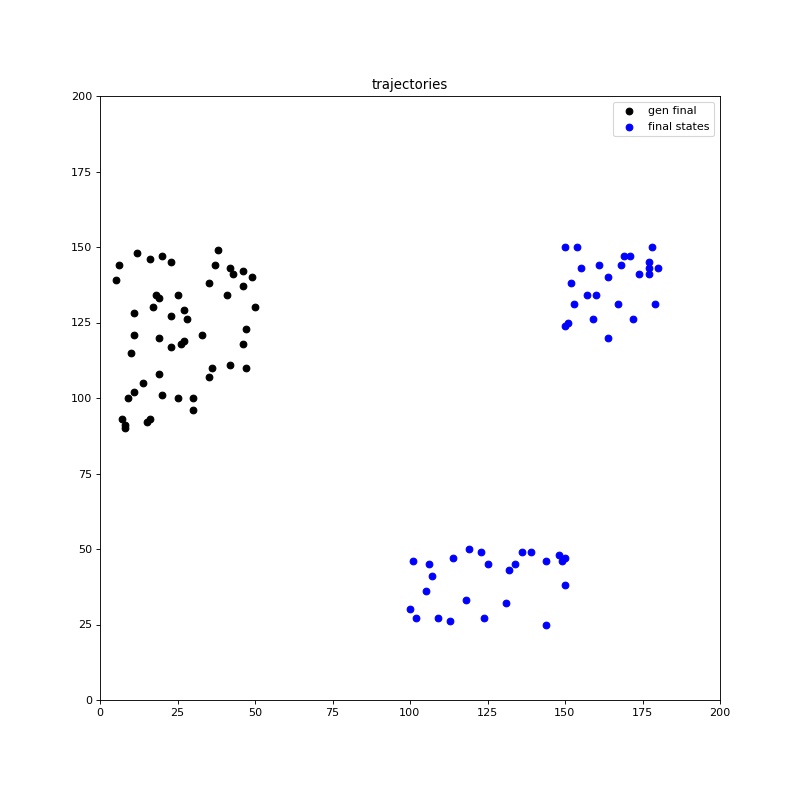}}
     \subfloat[][Syn-3:t=10]{\includegraphics[width=.25\linewidth]{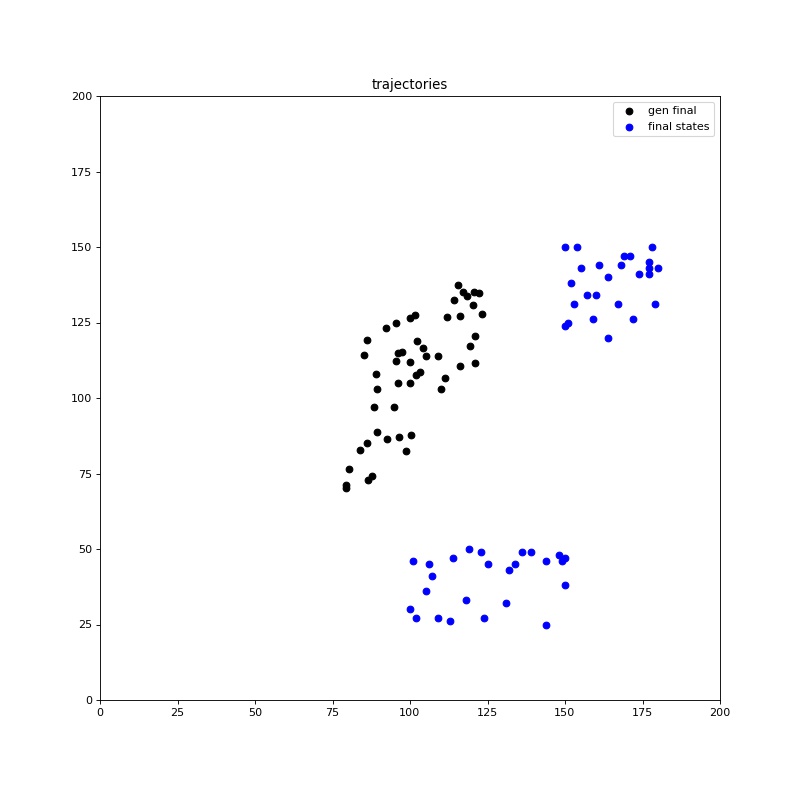}}
     \subfloat[][Syn-3:t=20]{\includegraphics[width=.25\linewidth]{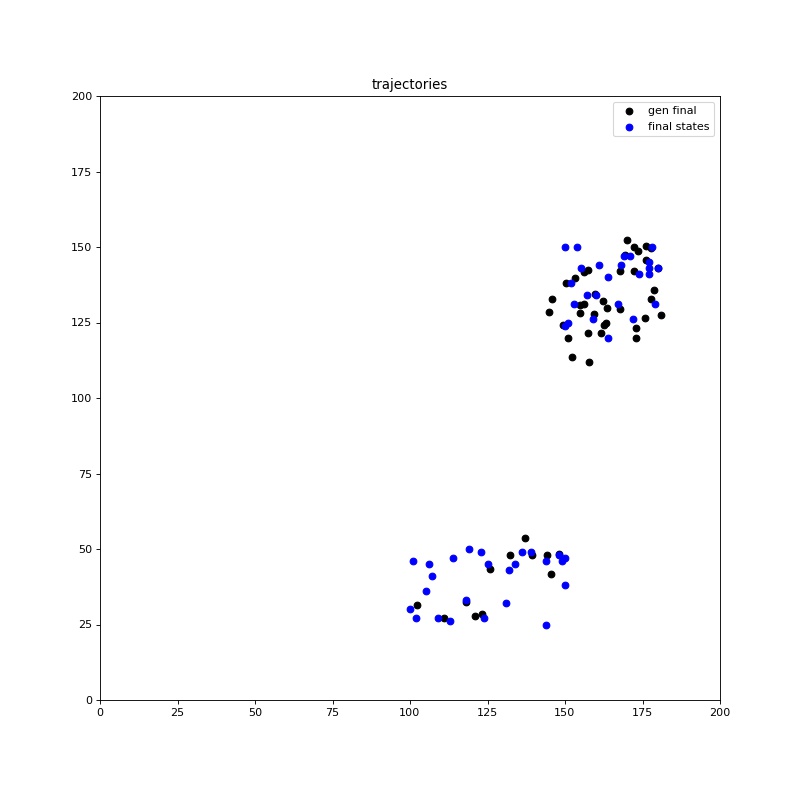}}
      \subfloat[][Closest Dist]{\includegraphics[width=.25\linewidth]{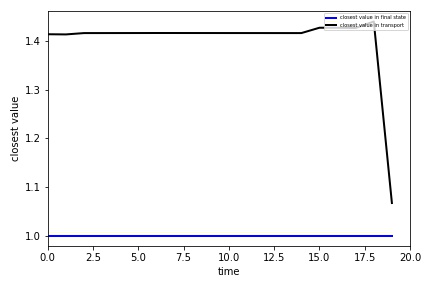}}
    % \vspace{-0.5em}
\caption{Results of ODC on Synthetic-3 to avoid collison on pairwise agent distance. The black point clouds indicate the controlled density, the blue cloud points indicate the target density. The agents move towards target distribution from (a) to (c) and in (d) shows the closest pairwise distance among agents at different time steps.}
% \vspace{-1.5em}
\label{fig:syn3}
\vspace{-12pt}
\end{figure}

\vspace{-9pt}
\noindent\textbf{Synthetic-3:} We also test Algorithm \ref{alg:large_n} (ODC) with an interaction potential with $V(x,y)=|x-y|^{-2}$ in \eqref{eq:interact-potential} to avoid agent collision on a 2D example. The agents are indicated by the black points and the target distribution is indicated by the blue points in Figure \ref{fig:syn3}. The left three panels in Figure \ref{fig:syn3} show them at initial time $t=0$, middle time $10$, and terminal time $20$. The rightmost figure show the minimum pairwise distance of the controlled agents stays around 1.4 until near the terminal time. This minimum distance is consistently above the minimum pairwise distance 1 of the target sample points, showing that the learned control tends to avoid collisions between the agents during the movement.

\begin{figure}[ht!]
    \centering
     % \subfloat[][Real-a:t=0]{\includegraphics[width=.25\linewidth]{}}
     % \subfloat[][Real-a:t=0.5]{\includegraphics[width=.25\linewidth]{}}
     % \subfloat[][Real-a: $t=1$]{\includegraphics[width=.25\linewidth]{}}\\
      \subfloat[][Real: t=0]{\includegraphics[width=.27\linewidth]{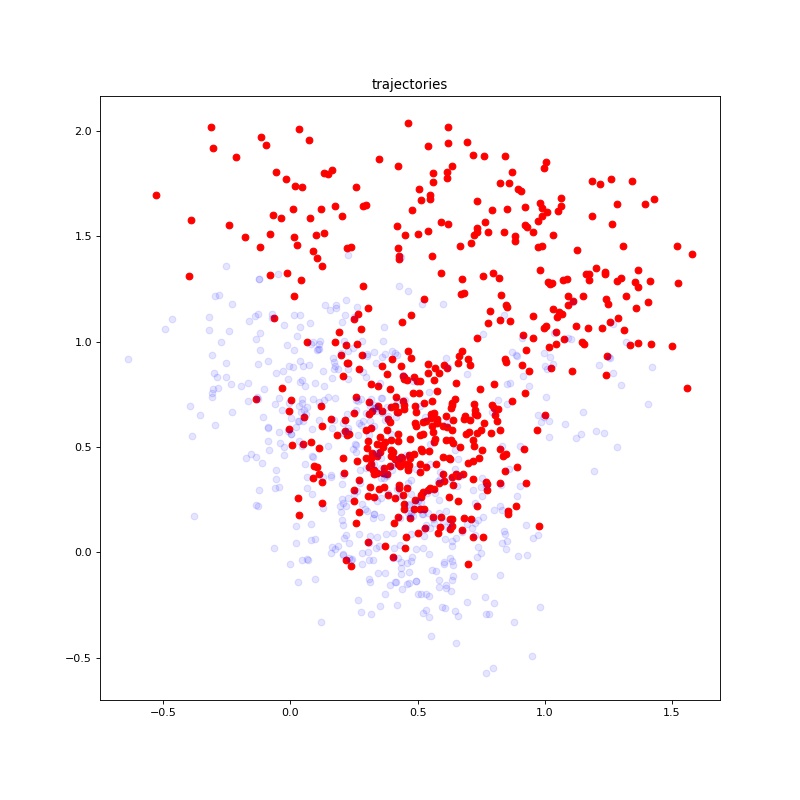}}
     \subfloat[][Real: t=0.5]{\includegraphics[width=.27\linewidth]{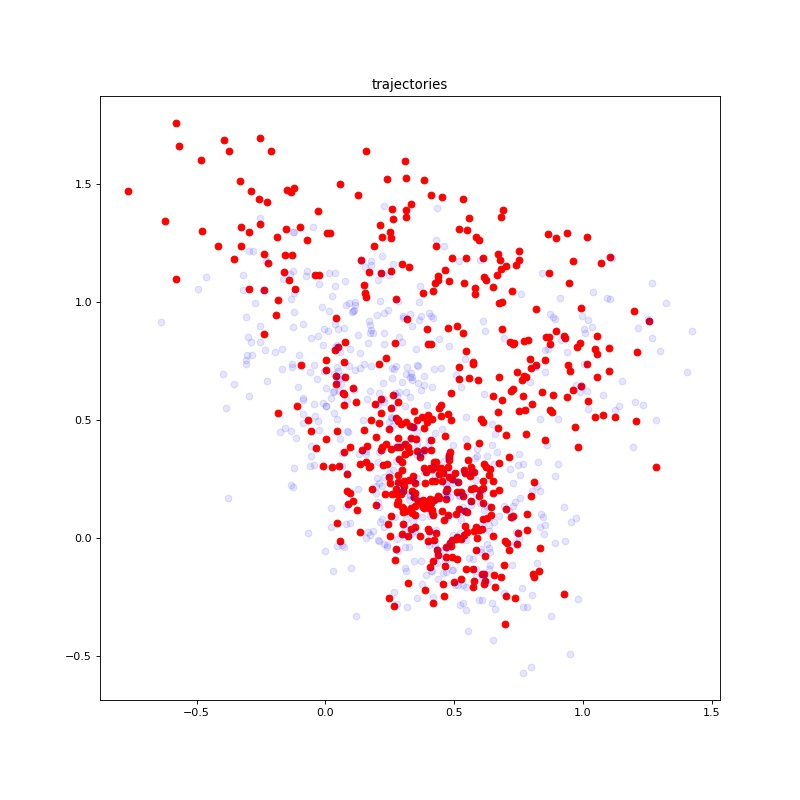}}
     \subfloat[][Real: t=1]{\includegraphics[width=.27\linewidth]{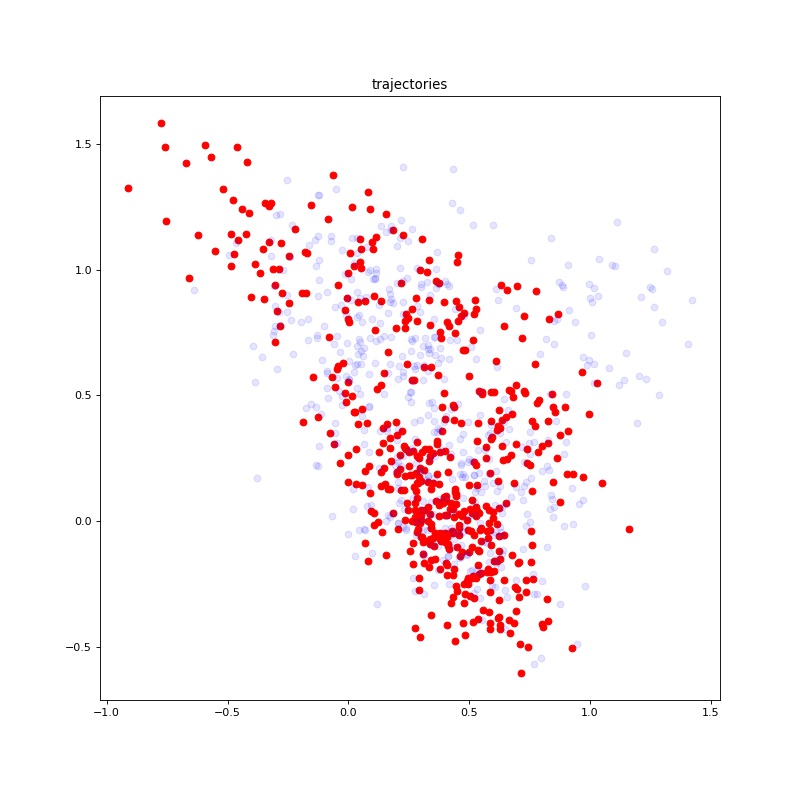}}
\caption{Results on real data. The controlled density (red) and the target density (blue) are shown at normalized times (a) $t=0$, (b) $t=0.5$ and (c) $t=1$.}
% \vspace{-1.5em}
\label{fig:real1}
\end{figure}
\vspace{-12pt}

\subsection{Real Data}
In this experiment we aim to control a group of Autonomous Underwater Vehicles (AUVs) to reach a target distribution. This experiment is motivated by the 16-month AUV ocean deployment (Processes driving Exchange At Cape Hatteras, PEACH) near Cape Hatteras, North Carolina, a highly dynamic region characterized by confluent western boundary currents and convergence in the adjacent shelf and slope waters. Due to the AUVs limited forward speed, the motion of AUV is highly influenced by the ocean flow field $v$. We define the cost functional as $r(x,u) = \frac{1}{2} \Ebb[\int_{0}^{T}|u(x_t) - v(x_t)|^2dt]$.
% \begin{align}
% \label{auv_control}
%     &\min_{U(x)}D(\rho_X, \rho_{\hat{X}}) + \int_0^1\int_{X}|U(x) - v(x)|^2\rho(x,t)dxdt\nonumber\\
%     &\text{subject to:} ~~~ X_{t+1} = X_t + hU(X_t),
% \end{align}
%where $U$ is the total speed of the vehicle, and $v$ is the ocean flow field. 
The input flow map in this simulation is given by a 1-km horizontal resolution version of the Navy Coastal Ocean Model \cite{martin2000} made available by J.\ Book and J.\ Osborne (Naval Research Laboratory, Stennis Space Center). In the AUV experiment, multiple vehicles were deployed repeatedly in the same domain, to sample the variability in the position of the Hatteras Front. Since the AUVs were deployed at similar locations throughout the $16$ months experiment, we collect the starting and goal positions from all experiments as the initial and target distribution of the system.   %The domain contains $130\times130$ grid points. the data of which is provided by GTSR Lab \cite{zhanglab}. 
We aim to move the AUVs to the surround of target location on distribution level. Since the data size is small ($10$ samples only), we augment the data by generating Gaussian samples around the given locations, thus the final distributions are mixture of Gaussians. The we apply Algorithm \ref{alg:large_n} (ODC) to the data and show the controlled density (red) and target density (blue) at normalized time $t=0,\ 0.5, 1.0$ in Figure \ref{fig:real1}. The network $\utheta$ is parameterized as a ResNet with three layers and each layer has 36 nodes.%\ye{network depth/width?}
As we can observe, the initial density gradually moves to the target density under the guidance of the learned control $\utheta$.
%
% As first row shows, with the influence of added flow field, the learned vector field pushforwards agents to match the target positions as close as possible. We also show the trajectories of the agents in (c), we see that the agents cannot match on one to one level, this is due to high non-convexity of the Chamfer distance, during the training process it easily falls into local minimum. From second row we see the whole distribution is moving to the target gradually.

\section{CONCLUSION}
\label{sec:conclusion}
In this paper, we develop a novel computational framework for density control. We formulate the control problem with running cost and penalize inaccurate matching to target density using Wasserstein metric. The control problem is reformulated as a saddle-point optimization problem of the control and the dual function of the Wasserstein distance. Both of the control and the dual function are parameterized as neural networks which can handle high-dimensional problems without any spatial discretization or basis representations. We validated the promising performance of this method on several synthetic and real data sets. 

%Unlike traditional control methods, our models are able to handle high dimensional cases by leveraging neural networks. We realize density control by minimizing Wasserstein distance between generated distribution and target distribution, which is seldom used in other works as well. Our models are validated by both synthetic and realistic experiments.

%%%%%%%%%%%%%%%%%%%%%%%%%%%%%%%%%%%%%%%%%%%%%%%%%%%%%%%%%%%%%%%%%%%%%%%%%%%%%%%%
%\section{ACKNOWLEDGMENTS}
%
%The authors gratefully acknowledge the contribution of National Research Organization and reviewers' comments.

%%%%%%%%%%%%%%%%%%%%%%%%%%%%%%%%%%%%%%%%%%%%%%%%%%%%%%%%%%%%%%%%%%%%%%%%%%%%%%%%

%References are important to the reader; therefore, each citation must be complete and correct. If at all possible, references should be commonly available publications.
%
%\begin{thebibliography}{99}
%
%\bibitem{c1}
%J.G.F. Francis, The QR Transformation I, {\it Comput. J.}, vol. 4, 1961, pp 265-271.
%
%\bibitem{c2}
%H. Kwakernaak and R. Sivan, {\it Modern Signals and Systems}, Prentice Hall, Englewood Cliffs, NJ; 1991.
%
%\bibitem{c3}
%D. Boley and R. Maier, "A Parallel QR Algorithm for the Non-Symmetric Eigenvalue Algorithm", {\it in Third SIAM Conference on Applied Linear Algebra}, Madison, WI, 1988, pp. A20.
%
%\end{thebibliography}

\bibliographystyle{abbrv}
\bibliography{odc_ref}

\end{document}